 \newtheorem{theorem}{Theorem}[section]
 \newtheorem{Prop}[theorem]{Proposition}
 \newtheorem{Lem}[theorem]{Lemma}
 \newtheorem{Cor}[theorem]{Corollary}
 \newtheorem{prob}[theorem]{Problem}
 \numberwithin{equation}{section}
\def\Zp{\mathbb{Z}^+}
\begin{document}

\title{Lipschitz equivalence of Cantor sets and irreducibility of polynomials}

\author{Jun Jason Luo}
\address{College of Mathematics and Statistics, Chongqing University,  401331 Chongqing, China
\newline\indent Institut f\"ur Mathematik, Friedrich-Schiller-Universit\"at Jena, 07743 Jena, Germany}
\email{jasonluojun@gmail.com}

\author{Huo-Jun Ruan}
\address{ Department of Mathematics, Zhejiang University, Hangzhou 310027, China}
\email{ruanhj@zju.edu.cn}

\author{Yi-Lin Wang}
\address{College of Mathematics and Statistics,  Chongqing Uinversity, Chongqing, 401331, China}
\email{yilinwang@cqu.edu.cn}

\keywords{Lipschitz equivalence, Cantor set, contraction vector, trinomial, quadrinomial.}

\thanks{The research of Luo and Wang is supported in part by the NNSF of China (No. 11301322), the Fundamental and Frontier Research Project of Chongqing (No.cstc2015jcyjA00035). The research of Ruan is supported in part by NSFC (No. 11271327) and ZJNSFC (No. LR14A010001).}

\subjclass[2010]{Primary 28A80; Secondary 11R09}

%\End Topmatter

\date{\today}

\begin{abstract}
In the paper, we provide an effective method for the Lipschitz equivalence of two-branch Cantor sets and three-branch Cantor sets by studying the
irreducibility of polynomials. We also find that any two Cantor sets are Lipschitz equivalent if and only if their contraction vectors are equivalent provided one of the contraction vectors is homogeneous.
\end{abstract}

\maketitle
\bigskip

\begin{section}{Introduction}
Let $E,F$ be two nonempty compact subsets of ${\mathbb R}^d$. We say that  $E$ and $F$ are 
{\it Lipschitz equivalent} and denote it by $E\sim F$ if there is a bi-Lipschitz map  $\phi$ from $E$ onto $F$, i.e., $\phi$ is a bijection and  there is a constant $C>0$ such that
$$
C^{-1}|x-y|\leq |\phi(x)-\phi(y)|\leq C|x-y| \quad \text{for all} \
  x,y\in E.$$

Lipschitz equivalence is an interesting topic in geometric measure theory and fractal geometry. It is well-known that Hausdorff dimension is a Lipschitz invariant.  Since the late 80's, there have been a lot of studies devoted to the topic (see \cite{CoPi88,FaMa89,FaMa92,LaLu12,LlMa10},\cite{RaRuWa10}-\cite{XiXi12}). A pioneer work on Cantor sets was done by Falconer and Marsh \cite{FaMa92}, where they gave some elementary conditions on the contraction ratios to determine the Lipschitz equivalence between
two {\it dust-like self-similar sets (also called Cantor sets)}. Recently, Rao, Ruan and Wang \cite{RaRuWa10} extended their result and developed several elegant algebraic criteria to characterize the Lipschitz equivalence of Cantor sets.

Let  $\{f_i\}_{i=1}^m$ with $f_i(x)=\alpha_iR_i(x+d_i)$ be an {\it iterated function system  (IFS)} on ${\mathbb{R}}^d$, where $0<\alpha_i<1$ are contraction ratios, $R_i$ are orthogonal matrices, and $d_i\in {\mathbb{R}}^d$. Then  there exists a unique nonempty compact subset $E$ \cite{Fa03} such that
\begin{equation}\label{ssset equ}
E=\bigcup_{i=1}^m f_i(E).
\end{equation}
We call such $E$ a \emph{self-similar set}, and call $E$ {\it dust-like}  if it further satisfies $f_i(E) \cap f_j(E)  = \emptyset $ for $i \not = j$. Given
${\alpha}_1,\dots,{\alpha}_m \in (0,1)$ with $\sum_{i=1}^m{\alpha}_i^d<1$, we call $\boldsymbol{\alpha}=({\alpha}_1,\dots,{\alpha}_m)$ a {\it contraction vector}, and denote by ${\mathcal{D}}(\boldsymbol{\alpha})$ the collection of all dust-like self-similar sets satisfying (\ref{ssset equ}). Clearly, all sets in
${\mathcal{D}}(\boldsymbol{\alpha})$ have the same Hausdorff dimension $s$ \cite{Fa03} which is the unique solution of
\begin{equation}\label{dim. formula}
\sum_{i=1}^m{\alpha}_i^s=1.
\end{equation}
 Moreover, any two sets in ${\mathcal{D}}(\boldsymbol{\alpha})$ are always Lipschitz equivalent. We  say ${\mathcal{D}}(\boldsymbol{\alpha})$ and ${\mathcal{D}}(\boldsymbol{\beta})$ are Lipschitz equivalent, and
often denote it by ${\mathcal{D}}(\boldsymbol{\alpha})\sim {\mathcal{D}}(\boldsymbol{\beta})$， if $E\sim F$ for some (thus for all) $E\in{\mathcal{D}}(\boldsymbol{\alpha}), F\in{\mathcal{D}}(\boldsymbol{\beta})$.

Let $E\in \mathcal D(\boldsymbol{\alpha})$ and $\Sigma^*=\bigcup_{k=0}^{\infty}\Sigma^k$  be the symbolic space representing the IFS as in \eqref{ssset equ} where  $\Sigma=\{1,\dots, m\}$ and $\Sigma^0=\emptyset$. Given ${\mathbf i}=i_1\cdots i_n\in  \Sigma^*$, we denote  $f_{\mathbf i}=f_{i_1}\circ f_{i_2}\circ\cdots\circ f_{i_n}$ and $\alpha_{\mathbf {i}}=\alpha_{i_1}\cdots\alpha_{i_n}$.  A subset $\Lambda$ of $\Sigma^*$ is called a \emph{partition} for $E$ if it satisfies $E=\bigcup_{\mathbf{i}\in\Lambda}f_{\mathbf{i}}(E)$, where the union is disjoint.

Let $\boldsymbol{\alpha}=({\alpha}_1,\dots,{\alpha}_m),\ \boldsymbol{\beta}=({\beta}_1,\dots,{\beta}_n)$ be two contraction vectors. We say $\boldsymbol{\alpha}$ is derived from $\boldsymbol{\beta}$ if there exists a partition $\Lambda=\{{\mathbf j}_i,\dots, {\mathbf j}_m\}$ such that $\boldsymbol{\alpha}=({\beta}_{{\mathbf j}_1},\dots, {\beta}_{{\mathbf j}_m})$. $\boldsymbol{\alpha}$ and $\boldsymbol{\beta}$  are called equivalent, denoted by $\boldsymbol{\alpha} \sim \boldsymbol{\beta}$,  if there exists a sequence $$\boldsymbol{\alpha}=\boldsymbol{\alpha}_1, \boldsymbol{\alpha}_2,\dots, \boldsymbol{\alpha}_N=\boldsymbol{\beta}$$ such that $\boldsymbol{\alpha}_{j+1}$ is derived from $\boldsymbol{\alpha}_j$ or vice versa for $1\leq j \leq N$. Trivially, if $\boldsymbol{\alpha} \sim \boldsymbol{\beta}$ then ${\mathcal{D}}(\boldsymbol{\alpha})\sim {\mathcal{D}}(\boldsymbol{\beta})$. A quite natural question is

\begin{prob}[Problem 1.6 in \cite{RaRuWa12}] \label{main-prob}
Find nontrivial sufficient conditions and necessary conditions on $\boldsymbol{\alpha}$ and $\boldsymbol{\beta}$ such that ${\mathcal{D}}(\boldsymbol{\alpha})\sim {\mathcal{D}}(\boldsymbol{\beta})$. In particular, is it true that ${\mathcal{D}}(\boldsymbol{\alpha})\sim {\mathcal{D}}(\boldsymbol{\beta})$ if and only if  $\boldsymbol{\alpha}\sim\boldsymbol{\beta}$?
\end{prob}

Write ${\mathbb Q}({\alpha}_1,\dots,{\alpha}_m)$ for the sub-field of $({\mathbb R},+,\times)$ formed by the rational functions of $\alpha_1,\dots,\alpha_m$, and sgp$({\alpha}_1,\dots,{\alpha}_m)$ for the sub-semigroup of $({\mathbb R}^+, \times)$ generated by ${\alpha}_1,\dots,{\alpha}_m$.  Falconer and Marsh \cite{FaMa92} provided some algebraic conditions for the above problem.

\begin{theorem}[\cite{FaMa92}]\label{Falconer necessary condition on lip.equ.}
Let $\boldsymbol{\alpha}=({\alpha}_1,\dots,{\alpha}_m), \boldsymbol{\beta}=({\beta}_1,\dots,{\beta}_n)$ be two contraction vectors. If
${\mathcal{D}}(\boldsymbol{\alpha})\sim{\mathcal{D}}(\boldsymbol{\beta})$ with common Hausdorff dimension $s$, then

\indent $(1)$  ${\mathbb Q}({\alpha}_1^s,\dots,{\alpha}_m^s)={\mathbb Q}({\beta}_1^s,\dots,{\beta}_n^s)$;

\indent  $(2)$ there exist $p,q\in \Zp$ such that
\begin{align*}
\mathrm{sgp}({\alpha}_1^p,\dots,{\alpha}_m^p) & \subset \mathrm{sgp}({\beta}_1,\dots,{\beta}_n), \\
\mathrm{sgp}({\beta}_1^q,\dots,{\beta}_n^q) & \subset
\mathrm{sgp}({\alpha}_1,\dots,{\alpha}_m).
\end{align*}
\end{theorem}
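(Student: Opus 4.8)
The plan is to convert the metric hypothesis into measure-theoretic data and then into algebra over the semigroups and fields generated by the ratios. First I would record the structural consequences of the dust-like assumption. Strong separation makes the coding map $\pi\colon\Sigma^{\infty}\to E$ a homeomorphism, so each cylinder $f_{\mathbf i}(E)$ is relatively clopen and every relatively clopen subset of $E$ is a finite disjoint union of cylinders; moreover $E$ and $F$ carry positive finite $\mathcal H^{s}$, and the normalised measures $\mu=\mathcal H^{s}|_{E}/\mathcal H^{s}(E)$ and $\nu=\mathcal H^{s}|_{F}/\mathcal H^{s}(F)$ are Ahlfors $s$-regular with $\mu(f_{\mathbf i}(E))=\alpha_{\mathbf i}^{s}$ and $\nu(g_{\mathbf j}(F))=\beta_{\mathbf j}^{s}$. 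A bi-Lipschitz $\phi\colon E\to F$ with constant $C$ is then a homeomorphism carrying clopen sets to clopen sets and satisfying $C^{-s}\mathcal H^{s}(A)\le\mathcal H^{s}(\phi(A))\le C^{s}\mathcal H^{s}(A)$; writing $\gamma=\mathcal H^{s}(E)/\mathcal H^{s}(F)$ this reads
\begin{equation*}
C^{-s}\gamma\,\mu(U)\le\nu(\phi(U))\le C^{s}\gamma\,\mu(U)\qquad(U\subseteq E\ \text{clopen}).
\end{equation*}

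For the semigroup statement (2) I would run a size-matching argument. Applying the displayed comparison to $U=f_{\mathbf i}(E)$ gives $\nu(\phi(f_{\mathbf i}(E)))\asymp\alpha_{\mathbf i}^{s}$; decomposing the clopen image $\phi(f_{\mathbf i}(E))=\bigsqcup_{\mathbf j\in\Lambda_{\mathbf i}}g_{\mathbf j}(F)$ and using the uniform separation of $F$-cylinders of a given generation, one checks that $\Lambda_{\mathbf i}$ contains a cylinder with $\beta_{\mathbf j}\asymp\alpha_{\mathbf i}$ while $\phi(f_{\mathbf i}(E))$ meets only boundedly many $F$-cylinders of that scale, and symmetrically for $\phi^{-1}$. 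This produces a constant $\kappa\ge1$ so that $T_{E}=\mathrm{sgp}(\alpha_{1},\dots,\alpha_{m})$ and $T_{F}=\mathrm{sgp}(\beta_{1},\dots,\beta_{n})$ are \emph{commensurable}: each element of one lies within a factor $\kappa$ of an element of the other. Passing to logarithms turns these into finitely generated additive sub-semigroups $A=\langle-\log\alpha_{i}\rangle$ and $B=\langle-\log\beta_{j}\rangle$ of $[0,\infty)$ that stay within a fixed Hausdorff distance, and the desired inclusions $\mathrm{sgp}(\alpha_{1}^{p},\dots,\alpha_{m}^{p})\subseteq\mathrm{sgp}(\beta_{1},\dots,\beta_{n})$ and its mirror become exactly $pA\subseteq B$ and $qB\subseteq A$ for suitable $p,q\in\mathbb Z^{+}$. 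The heart of this half is to show that closeness of the semigroups forces the generated groups $\mathbb Z\langle\log\alpha_{i}\rangle$ and $\mathbb Z\langle\log\beta_{j}\rangle$ to be commensurable, after which a pigeonhole/Frobenius-type argument on the far-out part of the numerical semigroups supplies the multiplier clearing denominators.

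For the field statement (1) I would exploit the clopen-set measures directly. Setting $\mu_{\phi}=\phi_{*}\mu$, for a cylinder $g_{\mathbf j}(F)$ the preimage $\phi^{-1}(g_{\mathbf j}(F))$ is clopen in $E$, so $\mu_{\phi}(g_{\mathbf j}(F))=\mu(\phi^{-1}(g_{\mathbf j}(F)))=\sum_{\mathbf i}\alpha_{\mathbf i}^{s}\in\mathbb Z[\alpha_{1}^{s},\dots,\alpha_{m}^{s}]$, whence every ratio $\mu_{\phi}(g_{\mathbf j}(F))/\nu(g_{\mathbf j}(F))=\bigl(\sum\alpha_{\mathbf i}^{s}\bigr)/\beta_{\mathbf j}^{s}$ lies in $\mathbb Q(\alpha_{1}^{s},\dots,\alpha_{m}^{s},\beta_{1}^{s},\dots,\beta_{n}^{s})$. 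Feeding these ratios into the Besicovitch differentiation of $\mu_{\phi}$ against the $s$-regular measure $\nu$ along the cylinder filtration, and running the mirror computation for $\phi^{-1}$, I would pin the two fields against one another to force $\mathbb Q(\alpha_{1}^{s},\dots,\alpha_{m}^{s})=\mathbb Q(\beta_{1}^{s},\dots,\beta_{n}^{s})$.

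The main obstacle, in both parts, is the upgrade from ``comparable'' to ``algebraically equal.'' Bounded-ratio closeness of $T_{E}$ and $T_{F}$ is automatic as soon as the logarithmic groups are dense, and hence carries no information on its own; the substantive step is to show that the global bijection $\phi$, and not just its effect on a single scale, compels the generated groups to be commensurable. For the field, the analogous trouble is that $\phi$ preserves $\mathcal H^{s}$ only up to the constant factor $\gamma C^{\pm s}$, so the non-constant Radon--Nikodym density $d\mu_{\phi}/d\nu$ must be neutralised before an exact identity of fields can be extracted from the limiting ratios. Both the dense-group case of the number theory and this density control are where I expect the argument to be genuinely delicate.
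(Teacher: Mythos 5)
This statement is quoted from Falconer and Marsh \cite{FaMa92}; the paper under review gives no proof of it, so your attempt can only be measured against the original argument. Your setup is the right one and matches how \cite{FaMa92} (and its descendants) begin: strong separation makes each $\phi(f_{\mathbf i}(E))$ a clopen subset of $F$, hence a finite disjoint union of cylinders, and the bi-Lipschitz constant forces those cylinders to have contraction ratios comparable to $\alpha_{\mathbf i}$ and bounds their number. But both halves of your proof stop exactly where the real work begins, and in part (2) the route you sketch would actually fail. Deducing only that each element of $\mathrm{sgp}(\alpha_1,\dots,\alpha_m)$ lies within a bounded factor of an element of $\mathrm{sgp}(\beta_1,\dots,\beta_n)$ is, as you yourself note, vacuous once the logarithms generate a dense subgroup. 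More seriously, your proposed repair --- prove commensurability of the generated \emph{groups} and then ``clear denominators'' by a Frobenius-type argument on the far-out part of the numerical semigroup --- cannot work when the rank is at least two: if, say, $\alpha_1=2^{-10}3^{5}$ and $(\beta_1,\beta_2)=(1/2,1/3)$, then $\alpha_1$ lies in the group generated by $\beta_1,\beta_2$, yet no power $\alpha_1^{p}$ lies in $\mathrm{sgp}(\beta_1,\beta_2)$, because the exponent of $3$ stays negative however large $p$ is; taking powers never converts an integer combination with a negative coefficient into a non-negative one. The semigroup inclusion must therefore be extracted directly from the geometry --- in \cite{FaMa92} this is done by a pigeonhole over the finitely many ``configurations'' in which $\phi$ can map a cylinder of $E$ onto a bounded union of comparably sized cylinders of $F$, which yields exact multiplicative identities between $\alpha$-words and $\beta$-words rather than mere comparability --- and that idea is absent from your write-up.

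Part (1) has the same character of gap. The ratios $\mu_{\phi}(g_{\mathbf j}(F))/\nu(g_{\mathbf j}(F))$ do lie in the composite field, but a limit of elements of a field need not lie in that field, so Besicovitch differentiation along the cylinder filtration cannot by itself force $\mathbb Q(\alpha_1^{s},\dots,\alpha_m^{s})=\mathbb Q(\beta_1^{s},\dots,\beta_n^{s})$; one needs an exact algebraic identity at some finite stage, which again comes from the finiteness of configurations and the resulting linear relations among the quantities $\mathcal H^{s}(\phi(f_{\mathbf i}(E)))$ over $\mathbb Q(\beta_1^{s},\dots,\beta_n^{s})$, not from a limiting density. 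Since you candidly flag both of these as the ``genuinely delicate'' points without supplying them, the proposal is a correct framing of the problem but not a proof.
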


Let $\langle\boldsymbol\alpha\rangle$ denote the free abelian group of $({\mathbb R}^+, \times)$ generated by ${\alpha}_1,\dots,{\alpha}_m$. In \cite{RaRuWa10}, Rao, Ruan and Wang defined a concept of  {\it rank} for $\boldsymbol{\alpha}$, say $\text{rank}\langle\boldsymbol\alpha\rangle$, to be the cardinality of the basis for the free abelian group $\langle\boldsymbol\alpha\rangle$. They partially improved Falconer and Marsh's theorem and obtained the necessary and sufficient condition on ${\mathcal{D}}(\boldsymbol{\alpha})\sim {\mathcal{D}}(\boldsymbol{\beta})$ in particular cases:

\begin{theorem}[\cite{RaRuWa10}]\label{Raothm1}
Let $\boldsymbol{\alpha}=({\alpha}_1,\dots,{\alpha}_m),
\boldsymbol{\beta}=({\beta}_1,\dots,{\beta}_m)$ be two contraction
vectors. If $\mathrm{rank}\langle\boldsymbol\alpha\rangle=m$, then
${\mathcal{D}}(\boldsymbol{\alpha})\sim
{\mathcal{D}}(\boldsymbol{\beta})$ if and only if
$\boldsymbol{\alpha}$ is a permutation of $\boldsymbol{\beta}$.

If $m=2$ and assume that $\alpha_1\leq \alpha_2$, $\beta_1\leq \beta_2$, $\alpha_1\leq \beta_1$, then
${\mathcal{D}}(\boldsymbol{\alpha})\sim
{\mathcal{D}}(\boldsymbol{\beta})$ if and only if $\boldsymbol{\alpha}=\boldsymbol{\beta}$, or there exists a
real number $0<\lambda<1$, such that
$({\alpha}_1,{\alpha}_2)=(\lambda^5,\lambda)$ and
$({\beta}_1,{\beta}_2)=(\lambda^3,\lambda^2)$.
\end{theorem}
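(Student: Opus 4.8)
The plan is to treat the two assertions separately, but in both cases to feed the hypothesis $\mathcal D(\boldsymbol\alpha)\sim\mathcal D(\boldsymbol\beta)$ into the necessary conditions of Falconer and Marsh (Theorem~\ref{Falconer necessary condition on lip.equ.}). The two backward implications are essentially free: if $\boldsymbol\alpha$ is a permutation of $\boldsymbol\beta$ then $\mathcal D(\boldsymbol\alpha)=\mathcal D(\boldsymbol\beta)$ as families, so $\mathcal D(\boldsymbol\alpha)\sim\mathcal D(\boldsymbol\beta)$; and for the exotic pair $(\lambda^5,\lambda)$, $(\lambda^3,\lambda^2)$ I would establish the equivalence by an explicit bi-Lipschitz correspondence of the two Cantor sets (equivalently, by checking the two contraction vectors are equivalent in the sense of the introduction). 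Thus the whole content lies in the forward directions.

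For the forward direction of the first assertion I would \emph{linearize}. With common dimension $s$, Theorem~\ref{Falconer necessary condition on lip.equ.}(2) gives $p,q\in\Zp$ with $\mathrm{sgp}(\alpha_1^p,\dots,\alpha_m^p)\subset\mathrm{sgp}(\beta_1,\dots,\beta_m)$ and $\mathrm{sgp}(\beta_1^q,\dots,\beta_m^q)\subset\mathrm{sgp}(\alpha_1,\dots,\alpha_m)$. Passing to the generated groups, $\langle\boldsymbol\alpha^p\rangle\subset\langle\boldsymbol\beta\rangle$ and $\langle\boldsymbol\beta^q\rangle\subset\langle\boldsymbol\alpha\rangle$; since $\mathrm{rank}\langle\boldsymbol\alpha^p\rangle=m$ and $\langle\boldsymbol\beta\rangle$ has $m$ generators, $\boldsymbol\beta$ also has rank $m$, so $\{\alpha_i\}$ and $\{\beta_j\}$ are two bases of free abelian groups of rank $m$. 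Writing $\mathbf x=(\log\alpha_i)$, $\mathbf y=(\log\beta_j)$ and recording the two semigroup memberships as nonnegative integer matrices $C,D$, I get $p\,\mathbf x=C\mathbf y$ and $q\,\mathbf y=D\mathbf x$, hence $pq\,\mathbf x=CD\,\mathbf x$ and $pq\,\mathbf y=DC\,\mathbf y$. As the $x_i$ (and the $y_j$) are $\mathbb Q$-linearly independent, each row is a trivial relation, forcing $CD=DC=pqI$. A nonnegative integer matrix whose inverse is again nonnegative is monomial, so $C=P_\sigma\Delta$ with $\sigma$ a permutation and $\Delta$ a positive integer diagonal; this translates into $\alpha_i=\beta_{\sigma(i)}^{e_i}$ with $e_i=c_i/p\in\mathbb Q_{>0}$.

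It remains to force $e_i=1$, and this is the main obstacle. Reindexing so that $\sigma=\mathrm{id}$, the two dimension identities read $\sum_i\beta_i^{e_is}=1=\sum_i\beta_i^{s}$. If the $e_i$ are \emph{integers} one is done at once: each $e_i\ge1$, so $\beta_i^{e_is}\le\beta_i^{s}$ with equality only when $e_i=1$, and equality of the two sums then forces $e_i=1$ termwise, i.e. $\boldsymbol\alpha$ is a permutation of $\boldsymbol\beta$. Hence everything reduces to proving that the $e_i=c_i/p$ are integers, and the dimension identity alone cannot achieve this (a non-integer $e_i<1$ yields a term $\beta_i^{e_is}>\beta_i^{s}$ that could a priori be balanced by others). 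Here I would bring in Theorem~\ref{Falconer necessary condition on lip.equ.}(1): setting $z_j=\beta_j^{s}$, so that the $z_j$ are still multiplicatively independent with $\sum_j z_j=1$ and $\sum_j z_j^{e_j}=1$, the field identity becomes $\mathbb Q(z_1^{e_1},\dots,z_m^{e_m})=\mathbb Q(z_1,\dots,z_m)$. Since each $z_i$ and $z_i^{e_i}$ then lie in the same field $\mathbb K$, if some $e_i$ had reduced denominator $p_i>1$ the field $\mathbb K$ would contain a proper $p_i$-th root $z_i^{1/p_i}$ of a generator, and I would argue that multiplicative independence of the $z_j$ forbids this (a Kummer-type statement), so every $e_i\in\mathbb Z$. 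Turning this heuristic into a theorem — reconciling the \emph{additive} relations $\sum z_j=\sum z_j^{e_j}=1$ with the purely \emph{multiplicative} independence hypothesis — is the step I expect to be the real fight.

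For the second assertion ($m=2$) I would split on the rank, noting first that the group inclusions above make $\mathrm{rank}\langle\boldsymbol\alpha\rangle=\mathrm{rank}\langle\boldsymbol\beta\rangle$. If this common rank is $2$, the first assertion applies and ``permutation'' together with $\alpha_1\le\alpha_2$, $\beta_1\le\beta_2$ collapses to $\boldsymbol\alpha=\boldsymbol\beta$. If the rank is $1$, then $\alpha_i=\lambda^{a_i}$ and $\beta_j=\lambda^{b_j}$ for a common $\lambda\in(0,1)$ and positive integers, and the two dimension identities say precisely that $x=\lambda^{s}\in(0,1)$ is a common root of the trinomials $X^{a_1}+X^{a_2}-1$ and $X^{b_1}+X^{b_2}-1$. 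The problem becomes purely algebraic: classify pairs of \emph{distinct} such trinomials sharing a root in $(0,1)$. This is exactly where the irreducibility theory of trinomials (the engine of the present paper) enters; the analysis should leave only the coincidence coming from
\begin{equation*}
X^{5}+X-1=(X^{2}-X+1)(X^{3}+X^{2}-1),
\end{equation*}
which pins $(a_1,a_2)=(5,1)$ and $(b_1,b_2)=(3,2)$ and hence, after matching the ordering hypotheses, the stated one-parameter family. I expect the bookkeeping of the ordering constraints and the exclusion of every other trinomial coincidence through irreducibility to be the technical heart of this half.
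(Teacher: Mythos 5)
This statement is quoted from \cite{RaRuWa10}; the present paper gives no proof of it, so there is no internal argument to compare yours against, and I can only judge the proposal on its own terms. Your opening reductions are sound: linearizing the Falconer--Marsh semigroup inclusions into $p\,\mathbf x=C\mathbf y$, $q\,\mathbf y=D\mathbf x$, deducing $CD=DC=pq\,I$ from the $\mathbb Q$-linear independence of the $\log\alpha_i$, and concluding that $C$ is monomial, so that $\alpha_i=\beta_{\sigma(i)}^{e_i}$ with $e_i\in\mathbb Q_{>0}$, is all correct; so is the reduction of the $m=2$, rank-one case to two trinomials $X^{a_1}+X^{a_2}-1$ and $X^{b_1}+X^{b_2}-1$ sharing their unique root in $(0,1)$, with $X^5+X-1=(X^2-X+1)(X^3+X^2-1)$ correctly identified as the source of the exceptional pair.

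However, both halves stop precisely at their load-bearing step, and in each case what you defer is the actual content of the theorem. For the full-rank part, forcing $e_i\in\mathbb Z$ is not a routine consequence of the field condition: the Kummer-type principle you propose (``multiplicative independence of the $z_j$ forbids $z_i^{1/p_i}\in\mathbb Q(z_1,\dots,z_m)$'') is false as stated --- for instance $z_1=1/4$ and $z_2=3/4$ are multiplicatively independent, satisfy $z_1+z_2=1$, and yet $z_1^{1/2}=1/2\in\mathbb Q=\mathbb Q(z_1,z_2)$ --- so any successful argument must genuinely exploit the additive relations $\sum_j z_j=\sum_j z_j^{e_j}=1$ together with the multiplicative structure, which is exactly the step you leave open. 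For the $m=2$ part, the classification of distinct trinomial pairs with a common root in $(0,1)$ requires running Ljunggren's criterion in both directions and, in particular, excluding the case where \emph{both} trinomials are reducible with the same non-cyclotomic irreducible factor; none of that bookkeeping is carried out. Finally, the sufficiency of the $(\lambda^5,\lambda)$--$(\lambda^3,\lambda^2)$ pair is not ``essentially free'': in \cite{RaRuWa10} it rests on a nontrivial graph-directed construction, and it is not known to follow from equivalence of the contraction vectors in the derivation sense. As it stands, the proposal is a plausible outline with its two (acknowledged) central difficulties unresolved, not a proof.
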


Moreover, if one of the two contraction vectors is homogeneous, they gave a complete characterization on the Lipschitz equivalence as below:

\begin{theorem}[\cite{RaRuWa10}]\label{Raothm2}
  Let $\boldsymbol{\alpha}=({\alpha},\dots,{\alpha})\in \mathbb{R}^m,
\boldsymbol{\beta}=({\beta}_1,\dots,{\beta}_n)\in \mathbb{R}^n$ be two contraction
vectors. If ${\mathcal{D}}(\boldsymbol{\alpha})$ and ${\mathcal{D}}(\boldsymbol{\beta})$ have the same dimension $s$. Then ${\mathcal{D}}(\boldsymbol{\alpha})\sim
{\mathcal{D}}(\boldsymbol{\beta})$ if and only if there exist $q,p_1,\ldots,p_n\in \Zp$ such that $m^{\frac{1}{q}}\in \Zp$ and $\beta_j = \alpha^{\frac{p_j}{q}}$ for all $j=1,\ldots,n$.
\end{theorem}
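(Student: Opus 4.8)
The plan is to prove both implications, using a common homogeneous refinement for the (easy) sufficiency and the Falconer--Marsh conditions (Theorem \ref{Falconer necessary condition on lip.equ.}) for the (harder) necessity. Throughout I would record that $\alpha^s=1/m$, which is forced by the dimension formula \eqref{dim. formula} applied to the homogeneous vector $\boldsymbol\alpha$, and that $d>s$ (from $m\alpha^d=\alpha^{d-s}<1$), so the auxiliary vector introduced below is admissible.

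For sufficiency, suppose $q,p_1,\dots,p_n\in\Zp$ satisfy $k:=m^{1/q}\in\Zp$ and $\beta_j=\alpha^{p_j/q}$. I set $\gamma=\alpha^{1/q}$, so that $\gamma^q=\alpha$, $\gamma^{p_j}=\beta_j$, and $\gamma^s=(\alpha^s)^{1/q}=k^{-1}$. Then $\boldsymbol\gamma:=(\gamma,\dots,\gamma)\in\mathbb{R}^k$ is a contraction vector of dimension $s$, since $k\gamma^s=1$ and $k\gamma^d=\gamma^{d-s}<1$. I would show $\boldsymbol\alpha\sim\boldsymbol\gamma\sim\boldsymbol\beta$ and then invoke the trivial implication $\boldsymbol\alpha\sim\boldsymbol\beta\Rightarrow{\mathcal D}(\boldsymbol\alpha)\sim{\mathcal D}(\boldsymbol\beta)$. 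For $\boldsymbol\alpha\sim\boldsymbol\gamma$, take the length-$q$ partition $\Lambda=\{1,\dots,k\}^q$ of a set in ${\mathcal D}(\boldsymbol\gamma)$: it has $k^q=m$ cylinders, each of ratio $\gamma^q=\alpha$, so it exhibits $\boldsymbol\alpha$ as derived from $\boldsymbol\gamma$. For $\boldsymbol\beta\sim\boldsymbol\gamma$, I need a partition of a set in ${\mathcal D}(\boldsymbol\gamma)$ whose cylinder ratios are exactly $\gamma^{p_1},\dots,\gamma^{p_n}$, i.e.\ a complete $k$-ary prefix code with word lengths $p_1,\dots,p_n$. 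By Kraft's theorem such a code exists and is exhaustive precisely when $\sum_j k^{-p_j}=1$; and indeed $k^{-p_j}=(\gamma^s)^{p_j}=\beta_j^s$, so $\sum_j k^{-p_j}=\sum_j\beta_j^s=1$ by \eqref{dim. formula}. This produces the required partition, hence $\boldsymbol\beta$ is derived from $\boldsymbol\gamma$.

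For necessity, assume ${\mathcal D}(\boldsymbol\alpha)\sim{\mathcal D}(\boldsymbol\beta)$ and apply Theorem \ref{Falconer necessary condition on lip.equ.}. Part $(1)$ gives ${\mathbb Q}(\beta_1^s,\dots,\beta_n^s)={\mathbb Q}(\alpha^s)={\mathbb Q}$ (since $\alpha^s=1/m$), whence each $\beta_j^s\in{\mathbb Q}$. The second inclusion in part $(2)$ provides $q\in\Zp$ with $\mathrm{sgp}(\beta_1^q,\dots,\beta_n^q)\subset\mathrm{sgp}(\alpha)=\{\alpha^t:t\in\Zp\}$, so $\beta_j^q=\alpha^{p_j}$ for some $p_j\in\Zp$, i.e.\ $\beta_j=\alpha^{p_j/q}$. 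Dividing $q$ and all the $p_j$ by $d=\gcd(q,p_1,\dots,p_n)$, I may assume these exponents are coprime without disturbing the relations; it then remains only to upgrade this to $m^{1/q}\in\Zp$.

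I expect the main obstacle to be this last integrality step, since Falconer--Marsh by itself yields only a rational exponent. Setting $x=m^{1/q}>0$, one has $x^q=m\in\Zp$ and $x^{-p_j}=(\alpha^s)^{p_j/q}=\beta_j^s\in{\mathbb Q}$, so $x^{p_j}\in{\mathbb Q}$ for every $j$. As $\gcd(q,p_1,\dots,p_n)=1$, a B\'ezout relation $1=aq+\sum_j b_jp_j$ gives $x=(x^q)^a\prod_j(x^{p_j})^{b_j}\in{\mathbb Q}$, and a positive rational root of the monic integer polynomial $T^q-m$ must be an integer, so $x=m^{1/q}\in\Zp$. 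The only delicate bookkeeping I anticipate lies in the coprimality reduction and in checking that the Falconer--Marsh exponents can be aligned to a single common denominator $q$; the algebraic core is otherwise short.
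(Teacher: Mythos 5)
Your proposal is correct and follows essentially the same route as the paper's own argument (given for the equivalent Theorem in Section 3): Falconer--Marsh parts (1) and (2) plus the coprimality/rational-root step for necessity, and a common homogeneous refinement $\boldsymbol\gamma=(\alpha^{1/q},\dots,\alpha^{1/q})\in\mathbb{R}^{m^{1/q}}$ for sufficiency. Your appeal to Kraft's theorem is exactly what the paper proves by induction as Lemma \ref{lemma on existence of a cut-set}, and your explicit B\'ezout computation just fills in the step the paper states as ``combining this with $\gcd(q,p_1',\dots,p_n')=1$.''
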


In this paper, by investigating the irreducibility of trinomials and quadrinomials, we provide an effective method for the Lipschitz equivalence of certain Cantor sets.

\begin{theorem}\label{main thm}
Let $\boldsymbol{\alpha}=(\lambda^a,\lambda^b,\lambda^c)$ and $\boldsymbol{\beta}=(\lambda^d,\lambda^e)$ be two contraction vectors with $0< \lambda<1,\ a,b,c,d,e\in \Zp$. If ${\mathcal{D}}(\boldsymbol{\alpha})\sim {\mathcal{D}}(\boldsymbol{\beta})$, then the polynomial $f(x)=x^a+x^b+x^c-1$ is reducible.
\end{theorem}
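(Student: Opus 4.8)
The plan is to convert the geometric hypothesis into an elementary divisibility statement and then kill it by one well-chosen specialization. First I would invoke that Hausdorff dimension is a bi-Lipschitz invariant, so $\mathcal{D}(\boldsymbol{\alpha})\sim\mathcal{D}(\boldsymbol{\beta})$ forces a common dimension $s$. Setting $x_0=\lambda^s$, the dimension formula $\sum_i\alpha_i^s=1=\sum_j\beta_j^s$ reads
\[
x_0^a+x_0^b+x_0^c=1=x_0^d+x_0^e ,
\]
so $x_0\in(0,1)$ (using $0<\lambda<1$ and $s>0$) is a common real root of $f(x)=x^a+x^b+x^c-1$ and of the trinomial $g(x)=x^d+x^e-1$. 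This reduces everything to a statement about two integer polynomials sharing the root $x_0$.

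Next I would assume, for contradiction, that $f$ is irreducible over $\mathbb{Q}$. We may take $a,b,c$ distinct (a genuine quadrinomial), so $f$ is monic; being irreducible with $f(x_0)=0$, it is precisely the minimal polynomial of $x_0$. Since $g(x_0)=0$ and $g\neq0$, the minimal polynomial must divide $g$, i.e. $f\mid g$ in $\mathbb{Q}[x]$. As $f$ is monic and $f,g\in\mathbb{Z}[x]$, Gauss's lemma then gives $g=f\,h$ with $h\in\mathbb{Z}[x]$; keeping $h$ integral is the point that makes the next step work.

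Finally I would specialize at $x=1$. One computes $f(1)=1+1+1-1=2$ and $g(1)=1+1-1=1$, so $g=fh$ yields $1=2\,h(1)$ with $h(1)\in\mathbb{Z}$, which is impossible by parity; hence $f$ is reducible. I expect the genuine obstacle to be conceptual rather than computational: the whole argument hinges on noticing the right specialization, namely that a quadrinomial of this shape has the even value $f(1)=2$ while the trinomial has the odd value $g(1)=1$, so that an integral factorization would force $2\mid 1$. It is worth emphasizing that this theorem needs only the Hausdorff-dimension equality to produce the common root; the deeper algebraic conditions of Falconer and Marsh are not required here. The only routine care is the monic normalization (the distinct-exponent convention) and the use of Gauss's lemma to ensure that $h(1)$ is an integer.
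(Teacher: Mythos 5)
Your proof is correct, and it takes a genuinely different and more economical route than the paper's. The paper splits into cases ($d=e$, $a=b=c$, and the generic case) and, in the generic case, works in the direction $h\mid f$: it invokes Ljunggren's classification of reducible trinomials to write $g(x)=x^d+x^e-1=(x^{2\ell}+\epsilon x^\ell+1)h(x)$ with $h$ irreducible, observes that the cyclotomic-type factor has no root in $(0,1)$ so $h(\lambda^s)=0$, concludes $h\mid f$, and then derives the contradiction at $x=1$ from the identity $x^d+x^e-1=(x^{2\ell}+\epsilon x^\ell+1)f(x)$. You instead argue in the direction $f\mid g$: an irreducible $f$ with $f(\lambda^s)=0$ is (an associate of) the minimal polynomial of $\lambda^s$, so it divides $g$, and the specialization $1=g(1)=f(1)h(1)=2h(1)$ with $h\in\mathbb{Z}[x]$ is absurd. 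This eliminates the case analysis and the appeal to Ljunggren entirely; what the paper's longer route buys is the by-product machinery it reuses later (notably Corollary 2.5, $\max\{a,b,c\}\geq\max\{d,e\}$, which comes from the $h\mid f$ direction and is needed in the proof of Theorem 2.8), whereas your argument yields only the reducibility statement itself. One small presentational point: your parenthetical ``we may take $a,b,c$ distinct'' is not justified and not needed. If exponents coincide, $f$ is no longer monic, but its content is still $1$ and $f(1)=2$ still holds, so the same argument runs with ``associate of the minimal polynomial'' in place of ``equal to the minimal polynomial'' and with the primitive form of Gauss's lemma; you should phrase it that way rather than silently assuming distinctness.
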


In some special case, we can improve the theorem to get a necessary and sufficient condition.

\begin{theorem}\label{main thm2}
Under the same assumption as above. If $a=b+c$, $\gcd(a,b,c)=1$.  Then
	$\mathcal D(\boldsymbol{\alpha})\sim\mathcal D(\boldsymbol{\beta})$  if and only if  $a=4$ with $\{b,c\}=\{1,3\}$ and $\{d,e\}=\{1,2\}$; or $a=8$ with $\{b,c\}=\{1,7\}$ and $\{d,e\}=\{1,5\}$, $\{2,3\}$.
\end{theorem}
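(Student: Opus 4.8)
The plan is to reduce both implications to a single algebraic number, namely $t:=\lambda^{s}\in(0,1)$, where $s$ is the common Hausdorff dimension. By \eqref{dim. formula}, $t$ is the root in $(0,1)$ of $f(x)=x^{a}+x^{b}+x^{c}-1$ and, simultaneously, of $g(x)=x^{d}+x^{e}-1$; since $a=b+c$ the first relation is $(t^{b}+1)(t^{c}+1)=2$. Observe that $\gcd(a,b,c)=1$ is the same as $\gcd(b,c)=1$, so by B\'ezout $\mathbb{Q}(t^{b},t^{c})=\mathbb{Q}(t)$. A Lipschitz equivalence $\mathcal D(\boldsymbol\alpha)\sim\mathcal D(\boldsymbol\beta)$ forces equal dimension, hence forces $f$ and $g$ to share the root $t$, and thus to share the minimal polynomial $m(x)$ of $t$ over $\mathbb{Q}$. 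Everything below is an analysis of this shared factor.

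For the \emph{if} direction I would exhibit the equivalences $\boldsymbol\alpha\sim\boldsymbol\beta$ of contraction vectors directly, so that $\mathcal D(\boldsymbol\alpha)\sim\mathcal D(\boldsymbol\beta)$ follows from the remark preceding Problem \ref{main-prob}. When $a=4$, $\{b,c\}=\{1,3\}$, $\{d,e\}=\{1,2\}$, the partition $\{1,21,22\}$ of the IFS $(\lambda,\lambda^{2})$ has contraction vector $(\lambda,\lambda^{3},\lambda^{4})$, so $\boldsymbol\alpha$ is derived from $\boldsymbol\beta$. When $a=8$, $\{b,c\}=\{1,7\}$, I would pass through the auxiliary vector $(\lambda^{2},\lambda^{3})$ using common refinements: the partition $\{1,211,212,221,222\}$ of $(\lambda^{2},\lambda^{3})$ and the one-step refinement of the first branch of $(\lambda,\lambda^{7},\lambda^{8})$ both produce $(\lambda^{2},\lambda^{7},\lambda^{8},\lambda^{8},\lambda^{9})$, whence $(\lambda,\lambda^{7},\lambda^{8})\sim(\lambda^{2},\lambda^{3})$; likewise the partition $\{1,21,22\}$ of $(\lambda^{2},\lambda^{3})$ and the refinement of the first branch of $(\lambda,\lambda^{5})$ both give $(\lambda^{2},\lambda^{5},\lambda^{6})$, so $(\lambda^{2},\lambda^{3})\sim(\lambda,\lambda^{5})$. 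Chaining these yields $\boldsymbol\alpha\sim(\lambda^{2},\lambda^{3})\sim\boldsymbol\beta$ for both admissible choices of $\{d,e\}$.

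For the \emph{only if} direction, the structural input is Theorem \ref{main thm}: $\mathcal D(\boldsymbol\alpha)\sim\mathcal D(\boldsymbol\beta)$ forces $f$ to be reducible, so $m$ is a \emph{proper} factor of $f$ that also divides the trinomial $g$. The core is then a classification: for which $(b,c)$ with $\gcd(b,c)=1$ does the root $t$ of $x^{b+c}+x^{b}+x^{c}-1$ also satisfy some trinomial $x^{d}+x^{e}-1$? I would first analyze the factorization of the quadrinomial $f$; for instance $x^{2}+1\mid f$ exactly when $\{b,c\}\equiv\{1,3\}\pmod 4$, which already constrains the residues in the surviving cases, and more generally I would identify $m$ and bound $\deg m$. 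Given $m$, the compatible trinomials are then \emph{finitely} determined: expanding $t^{d}+t^{e}$ in the power basis $\{1,t,\dots,t^{\deg m-1}\}$ and solving $t^{d}+t^{e}=1$ leaves only finitely many $(d,e)$. When $m(x)=x^{3}+x^{2}-1$ one checks directly that $t^{d}+t^{e}=1$ holds only for $\{d,e\}=\{2,3\}$ and $\{1,5\}$, matching the $a=8$ list, while $a=4$ arises from $m(x)=x^{2}+x-1$ with $\{d,e\}=\{1,2\}$.

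I expect the main obstacle to be \emph{bounding} $a$ and eliminating the reducible-but-spurious cases, since reducibility of $f$ is necessary but far from sufficient. For example $x^{8}+x^{5}+x^{3}-1=(x^{2}+1)(x^{6}-x^{4}+x^{3}+x^{2}-1)$ is reducible with $a=8$, $\{b,c\}=\{3,5\}$, yet the degree-six factor is the minimal polynomial of a $t$ that divides no trinomial $x^{d}+x^{e}-1$, so this case must be ruled out. Thus the heart of the argument is a sharp common-factor analysis of the quadrinomials $x^{b+c}+x^{b}+x^{c}-1$ against the trinomials $x^{d}+x^{e}-1$, showing that they can share a non-cyclotomic irreducible factor only for $(b,c)\in\{(1,3),(1,7)\}$ up to order. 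This is where the delicate work lies, and where I would deploy the field and semigroup conditions of Theorem \ref{Falconer necessary condition on lip.equ.} to cut down the possibilities before the final finite check.
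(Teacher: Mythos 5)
Your ``if'' direction is complete and correct, and essentially coincides with the paper's argument: the common refinements you exhibit (e.g.\ $(\lambda,\lambda^{7},\lambda^{8})$ and $(\lambda^{2},\lambda^{3})$ both refining to the multiset $\{\lambda^{2},\lambda^{7},\lambda^{8},\lambda^{8},\lambda^{9}\}$) are exactly the derivations used there, and your direct common refinement showing $(\lambda,\lambda^{5})\sim(\lambda^{2},\lambda^{3})$ is, if anything, cleaner than the paper's appeal to Theorem \ref{Raothm1}.

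The ``only if'' direction, however, has a genuine gap. You correctly reduce the problem to classifying the pairs $(b,c)$ with $\gcd(b,c)=1$ for which $x^{b+c}+x^{b}+x^{c}-1$ and some $x^{d}+x^{e}-1$ share the root $t=\lambda^{s}$, and you correctly observe that reducibility of $f$ (Theorem \ref{main thm}) is far from sufficient --- but you then defer precisely this classification to ``delicate work'' that you do not carry out, and the tool you propose for it (the field and semigroup conditions of Theorem \ref{Falconer necessary condition on lip.equ.}) is not what makes it go through. The missing ingredient is Mills' theorem (Proposition \ref{prop3.2}): writing $f=A(x)B(x)$ with $A$ the cyclotomic part, $B$ is \emph{irreducible} except in four explicitly listed shapes, of which only $x^{8}+x^{7}+x-1$ is compatible with $a=b+c$ and $\gcd(a,b,c)=1$; that case yields $\{d,e\}=\{2,3\}$ or $\{1,5\}$ after factoring $g$ via Proposition \ref{prop1}. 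In the remaining case $B$ is irreducible, hence is the minimal polynomial of $t$; Ljunggren's localization of the roots of unity (Proposition \ref{prop3.3}) together with the identity $f(x)=(x^{b}+1)(x^{c}+1)-2$ (which shows no root of unity of $f$ satisfies $x^{b}=\pm1$ or $x^{c}=\pm1$) pins $A(x)$ down to $x+1$, $x^{2}+1$ or $(x+1)(x^{2}+1)$, and elementary coefficient and evaluation comparisons of $x^{a}+x^{b}+x^{c}-1$ against $A(x)(x^{d}+x^{e}-1)$ --- using $a\ge d$ from Corollary \ref{cor-a>d} and the factorization of $g$ when $g$ is reducible --- eliminate everything except $(a,b,c,d,e)=(4,3,1,2,1)$. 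Without Mills' and Ljunggren's results (or a substitute) you cannot bound $\deg m$ or the set of candidate $(b,c)$, so the ``final finite check'' you invoke never gets off the ground; your own example $x^{8}+x^{5}+x^{3}-1$ shows you see the difficulty, but the proposal does not resolve it.
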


Finally, inspired by Theorem \ref{Raothm2}, we give an affirmative answer to the later part of Problem \ref{main-prob} when $\boldsymbol{\alpha}$ (or $\boldsymbol{\beta}$) is homogeneous.

\begin{theorem}\label{main thm3}
Let $\boldsymbol{\alpha}=({\alpha},\dots,{\alpha})\in {\mathbb{R}}^m,\  \boldsymbol{\beta}=({\beta}_1,\dots,{\beta}_n)\in {\mathbb{R}}^n$ be two contraction vectors. Then ${\mathcal{D}}(\boldsymbol{\alpha})\sim {\mathcal{D}}(\boldsymbol{\beta})$ if and only if $\boldsymbol{\alpha} \sim \boldsymbol{\beta}$.
\end{theorem}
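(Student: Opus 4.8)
The backward implication is the trivial direction already recorded in the introduction, so the entire content is the forward one: I must show that $\mathcal{D}(\boldsymbol{\alpha})\sim\mathcal{D}(\boldsymbol{\beta})$ forces $\boldsymbol{\alpha}\sim\boldsymbol{\beta}$. The plan is to factor both vectors through a common homogeneous ``root'' vector $\boldsymbol{\gamma}$ and to recognize the numerical constraint coming from the dimension formula as the equality case of the Kraft inequality for prefix codes. First I would invoke Theorem \ref{Raothm2}: since $\boldsymbol{\alpha}=(\alpha,\dots,\alpha)\in\mathbb{R}^m$ is homogeneous and $\mathcal{D}(\boldsymbol{\alpha})\sim\mathcal{D}(\boldsymbol{\beta})$, there exist $q,p_1,\dots,p_n\in\Zp$ with $k:=m^{1/q}\in\Zp$ and $\beta_j=\alpha^{p_j/q}$ for every $j$. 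Setting $\gamma:=\alpha^{1/q}\in(0,1)$, let $\boldsymbol{\gamma}=(\gamma,\dots,\gamma)$ be the homogeneous vector with $k$ entries, so that $\alpha=\gamma^q$ and $\beta_j=\gamma^{p_j}$.

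Next I would record the numerical facts that drive the argument. From the dimension formula $m\alpha^s=1$ for $\boldsymbol{\alpha}$ we get $\alpha^s=k^{-q}$, hence $\gamma^s=(\alpha^s)^{1/q}=1/k$. In particular, for the ambient dimension $d$ one has $s<d$ (because $m\alpha^d<1$), so $k\gamma^d=\gamma^{\,d-s}<1$, which shows that $\boldsymbol{\gamma}$ is itself a legitimate contraction vector in the same $\mathbb{R}^d$ with similarity dimension $s$; this makes $\boldsymbol{\gamma}$ an admissible intermediate vector in the chains defining $\sim$. Feeding $\gamma^s=1/k$ into the dimension formula for $\boldsymbol{\beta}$ gives $\sum_{j=1}^n\beta_j^s=\sum_{j=1}^n(\gamma^s)^{p_j}=\sum_{j=1}^n k^{-p_j}=1$, which is exactly the equality case of the Kraft inequality over an alphabet of $k$ symbols.

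I would then establish $\boldsymbol{\alpha}\sim\boldsymbol{\gamma}$ and $\boldsymbol{\beta}\sim\boldsymbol{\gamma}$ by exhibiting explicit partitions of the $\boldsymbol{\gamma}$-system on the alphabet $\Sigma=\{1,\dots,k\}$. For $\boldsymbol{\alpha}$, the set $\Sigma^q$ of all length-$q$ words is a partition consisting of $k^q=m$ words, each with $\gamma_{\mathbf i}=\gamma^q=\alpha$, so $\boldsymbol{\alpha}$ is derived from $\boldsymbol{\gamma}$. For $\boldsymbol{\beta}$, the Kraft equality $\sum_j k^{-p_j}=1$ guarantees, by the standard (greedy) construction of complete prefix codes, a prefix-free set $\Lambda=\{\mathbf j_1,\dots,\mathbf j_n\}\subset\Sigma^*$ with $|\mathbf j_i|=p_i$ that is complete, i.e.\ every element of $\Sigma^L$ with $L=\max_i|\mathbf j_i|$ has a (unique) prefix in $\Lambda$. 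Prefix-freeness makes the cylinders $f_{\mathbf j_i}(E)$ pairwise disjoint and completeness makes them cover $E$, so $\Lambda$ is a partition in the sense of the paper; since $\gamma_{\mathbf j_i}=\gamma^{p_i}=\beta_i$, the vector $\boldsymbol{\beta}$ is derived from $\boldsymbol{\gamma}$. Transitivity of $\sim$ then yields $\boldsymbol{\alpha}\sim\boldsymbol{\gamma}\sim\boldsymbol{\beta}$, completing the forward direction.

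The step requiring the most care is the translation between the algebraic output of Theorem \ref{Raothm2} and the combinatorial partition language: one must verify that an antichain in $\Sigma^*$ yields disjoint cylinders \emph{precisely because} $E$ is dust-like, and that it is the Kraft \emph{equality} (not merely the inequality) that upgrades a prefix code to a covering of $E$, hence to a genuine partition. Once this correspondence is in place, the rest is the routine bookkeeping of checking that $\gamma^s=1/k$, that $\boldsymbol{\gamma}$ is admissible, and that the relevant contraction products collapse to $\alpha$ and to the $\beta_j$.
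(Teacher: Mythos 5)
Your proof is correct and follows essentially the same route as the paper: both arguments factor $\boldsymbol{\alpha}$ and $\boldsymbol{\beta}$ through a common homogeneous vector $\boldsymbol{\gamma}=(\alpha^{1/q},\dots,\alpha^{1/q})$ of length $k=m^{1/q}$ and realize each as derived from $\boldsymbol{\gamma}$ via explicit partitions, concluding by transitivity. The only differences are packaging: you obtain the exponents $p_j/q$ and the integrality of $m^{1/q}$ by citing Theorem \ref{Raothm2} directly, where the paper re-derives these facts from Theorem \ref{Falconer necessary condition on lip.equ.}, and your appeal to the Kraft equality and complete prefix codes is exactly the content of the paper's Lemma \ref{lemma on existence of a cut-set}, which it proves by induction on the maximal exponent.
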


The paper is organized as follows: In section 2, we prove Theorems \ref{main thm} and \ref{main thm2} by applying the irreducibility of integer polynomials, and  also provide several easy criteria to judge the non-Lipschitz equivalence of Cantor sets.  A short proof of Theorem \ref{main thm3} will be included in Section 3.
\end{section}

\begin{section}{Irreducibility of polynomials}

The irreducibility of polynomials is a classical subject and there are lots of related works in the literature  (please refer to \cite{Du04,FiJo06,Lj60,Mi85}).  In this section, we  recall some results on the irreducibility of  certain trinomials and quadrinomials. Then we establish the relationship between the irreducibility of polynomials and Lipschitz equivalence of Cantor sets.

\begin{Prop}[\cite{Lj60}]\label{prop1}
Let $a\geq 2b >0$. Write $a=a_1\ell, b=b_1\ell$, where
$\ell=\gcd(a,b)$. Then the polynomial $g(x)=x^a+\epsilon
x^b+\delta,\  \epsilon, \delta\in\{1,-1\}$ is irreducible unless
$a_1+b_1=0 \ (\text{mod}\ 3)$ and one of the following three
conditions holds: $a_1,b_1$ are both odd and $\epsilon=1$; $a_1$ is
even and $\delta=1$; $b_1$ is even and $\epsilon= \delta$.

In any of these exceptional cases, $g(x)$ is the product of the
polynomial $x^{2\ell}+\epsilon^{b_1}\delta^{a_1}x^\ell+1$ and a
second irreducible polynomial.
\end{Prop}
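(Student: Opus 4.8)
The plan is to reduce to the coprime case and then use the classification of short vanishing sums of roots of unity to isolate the only possible factor. First I would set $\ell=\gcd(a,b)$ and write $g(x)=G(x^\ell)$ with $G(y)=y^{a_1}+\epsilon y^{b_1}+\delta$ and $\gcd(a_1,b_1)=1$; dividing the hypothesis $a\ge 2b$ by $\ell$ gives $a_1\ge 2b_1$, which fixes the normalization. Because the claimed exceptional conditions involve only $a_1,b_1,\epsilon,\delta$ and not $\ell$, the substitution $x\mapsto x^\ell$ must not create new factorizations, a point I would justify by a Capelli-type argument: the constant term of $G$ is $\pm1$, so a root $\theta$ of an irreducible $G$ is not a proper power in $\mathbb{Q}(\theta)$, whence $x^\ell-\theta$ stays irreducible over $\mathbb{Q}(\theta)$. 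It therefore suffices to determine the reducibility of the coprime trinomial $G$ and to transport the resulting factorization back through $x\mapsto x^\ell$ at the end.

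The next step is to pin down the only cyclotomic factor $G$ can have. If a root of unity $\zeta$ satisfies $G(\zeta)=0$, then multiplying by $\delta$ gives $\delta\zeta^{a_1}+\delta\epsilon\zeta^{b_1}+1=0$, exhibiting three roots of unity summing to zero; an elementary argument (equate real and imaginary parts) shows a vanishing sum of three roots of unity must be proportional to $(1,\omega,\omega^2)$, where $\omega$ is a primitive cube root of unity. Hence $\{\delta\zeta^{a_1},\delta\epsilon\zeta^{b_1}\}=\{\omega,\omega^{2}\}$, and since $\gcd(a_1,b_1)=1$ I can recover $\zeta=(\zeta^{a_1})^{u}(\zeta^{b_1})^{v}$ (with $ua_1+vb_1=1$) as a sixth root of unity; as $\pm1$ are never roots (three terms $\pm1$ cannot sum to $0$), $\zeta$ is a primitive cube or sixth root. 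Reducing the exponents modulo $3$ and tracking the parities of $a_1,b_1$ together with the signs $\epsilon,\delta$ then yields exactly the condition $a_1+b_1\equiv0\pmod 3$ with the three stated alternatives, and simultaneously identifies the quadratic factor as $y^2+\epsilon^{b_1}\delta^{a_1}y+1$ (namely $\Phi_3$ or $\Phi_6$).

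The main obstacle is the converse direction: showing that $G$ has no \emph{further} proper factor, i.e. that outside the exceptional families $G$ is irreducible, and inside them the cofactor $G(y)/(y^2+\epsilon^{b_1}\delta^{a_1}y+1)$ is irreducible. Since the computation above shows the only cyclotomic factor available is this quadratic, any extra factor would be non-cyclotomic, i.e. would have a root that is not a root of unity; ruling these out is where essentially all the difficulty of the theorem lies. Here I would follow Ljunggren's resultant and reciprocal-polynomial analysis: pass to the reciprocal trinomial $G^{*}(y)=\delta y^{a_1}+\epsilon y^{a_1-b_1}+1$, show by computing $\mathrm{Res}(P,P^{*})$ that a non-cyclotomic irreducible factor $P$ cannot be paired with its reciprocal inside $G$, and bound the degrees of admissible reciprocal factors so that the quadratic is the only one possible. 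This step is delicate because it must control all the non-unit roots of a trinomial simultaneously.

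Finally, I would assemble the pieces: combining the divisibility criterion from the root-of-unity computation with the irreducibility of the cofactor shows that $G$ is irreducible except in the three families, where $G=(y^2+\epsilon^{b_1}\delta^{a_1}y+1)\cdot(\text{irreducible})$. Transporting this through $x\mapsto x^\ell$ via the Capelli reduction gives the stated factorization $g(x)=(x^{2\ell}+\epsilon^{b_1}\delta^{a_1}x^\ell+1)\cdot(\text{irreducible})$. The two points I would watch most carefully are the Capelli reduction, ensuring no factors are gained or lost under $x\mapsto x^\ell$, and the exhaustive exclusion of non-cyclotomic factors in the third paragraph, which carries the real weight of the proof.
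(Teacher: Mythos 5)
This proposition is quoted in the paper from Ljunggren \cite{Lj60} and used as a black box; the paper contains no proof of it, so your sketch can only be measured against Ljunggren's actual argument. Your second paragraph (classifying the possible cyclotomic factors via vanishing sums of three roots of unity, recovering $\zeta$ as a sixth root of unity from $\gcd(a_1,b_1)=1$, and excluding $\pm1$ by parity) is correct and is indeed how the exceptional quadratic $y^2+\epsilon^{b_1}\delta^{a_1}y+1$ is identified. But there are two genuine gaps. First, the Capelli-type reduction is unjustified as stated: from ``the constant term of $G$ is $\pm1$'' you infer that a root $\theta$ of an irreducible $G$ is not a proper power in $\mathbb{Q}(\theta)$, and this inference is false in general --- units in number fields are routinely proper powers (e.g.\ $3+2\sqrt2=(1+\sqrt2)^2$ in $\mathbb{Q}(\sqrt2)$). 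So you have not shown that the non-cyclotomic factor $B(y)$ of $G$ stays irreducible under $y\mapsto x^\ell$, which is exactly what the general-$\ell$ statement requires. Ljunggren avoids this by running his main lemma directly on $g(x)=x^a+\epsilon x^b+\delta$ rather than on the coprime reduct.

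Second, and more seriously, the heart of the theorem --- that $g$ has no non-cyclotomic irreducible factor beyond a single one --- is only named, not proved. Your description (``compute $\mathrm{Res}(P,P^*)$ and bound the degrees of admissible reciprocal factors'') does not identify a workable mechanism. The actual argument is of a different nature: assuming $g=g_1g_2$, one forms $E=g_1g_2^*$, notes $EE^*=gg^*$, and compares the sum of the squares of the coefficients of $E$ with that of $g$; because $g$ has only three nonzero coefficients, each $\pm1$, this forces $E=\pm x^k g$ or $\pm x^k g^*$, hence one of $g_1,g_2$ divides $\gcd(g,g^*)$, whose roots are the roots of unity already classified. Without this (or an equivalent) step your proposal establishes only the ``if a root of unity is a zero, then the stated conditions hold and the quadratic divides'' direction, not the irreducibility claims, which is where, as you yourself note, essentially all the weight of the proposition lies.
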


\begin{Prop}[\cite{Mi85}]\label{prop3.2}
	Suppose that $f(x)$ is a polynomial over the rationals of the form
	$$f(x)=x^a+\epsilon_1x^b+\epsilon_2x^c+\epsilon_3,$$
	where $a>b>c>0$ and $\epsilon_1,\epsilon_2,\epsilon_3\in\{1,-1\}$. Let $f(x)=A(x)B(x)$ where every root of $A(x)$ and no root of $B(x)$ is a root of unity. Then $A(x)$ is the greatest common divisor of $f(x)$ and $f^*(x)$, where $f^*(x)$ denote the reciprocal polynomial $f^*(x)=x^af(x^{-1})$. The second factor $B(x)$ is irreducible except when $f(x)$ is one of the following four forms:
	\begin{align}
		& x^{8r}+x^{7r}+x^{r}-1=(x^{2r}+1)(x^{3r}+x^{2r}-1)(x^{3r}-x^{r}+1) \label{3.1}\\
		& x^{8r}-x^{7r}-x^{r}-1=(x^{2r}+1)(x^{3r}-x^{2r}+1)(x^{3r}-x^{r}-1) \label{3.2} \\
		& x^{8r}+x^{4r}+x^{2r}-1=(x^{2r}+1)(x^{3r}+x^{2r}-1)(x^{3r}-x^{2r}+1) \label{3.3}\\
		& x^{8r}+x^{6r}+x^{4r}-1=(x^{2r}+1)(x^{3r}-x^{r}-1)(x^{3r}-x^{r}+1) \label{3.4}
	\end{align}
	In above cases, the factors of degree $3r$ are irreducible.
\end{Prop}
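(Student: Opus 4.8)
The plan is to sort the roots of $f$ according to whether or not they are roots of unity, and to govern the two parts through the reciprocal polynomial $f^*$. First I would set $f=A\cdot B$, where $A$ is the product of the cyclotomic polynomials dividing $f$ (so every root of $A$ is a root of unity) and $B$ carries the remaining factors (so no root of $B$ is a root of unity). Since $f(0)=\epsilon_3=\pm1\neq0$, neither $A$ nor $B$ is divisible by $x$, and as $f$ is monic over $\mathbb{Z}$, both $A$ and $B$ may be taken monic with integer coefficients. The statement then splits into two tasks: (i) identifying $A$ with $\gcd(f,f^*)$, and (ii) deciding when $B$ is irreducible, with the classification of the exceptional cases.

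For (i), the inclusion $A\mid\gcd(f,f^*)$ is the easy direction: each cyclotomic polynomial satisfies $\Phi_n^*=\pm\Phi_n$, so $A^*=\pm A$; writing $f=Ag$ gives $f^*=A^*g^*=\pm Ag^*$, whence $A\mid f^*$ as well. The substantive direction is $\gcd(f,f^*)\mid A$, i.e. that every common root $\theta$ of $f$ and $f^*$ is a root of unity. Here I would use that $f^*(\theta)=\theta^a f(1/\theta)=0$ forces $f(1/\theta)=0$, so $\theta$ and $\theta^{-1}$ are simultaneous roots of the quadrinomial; combining $\theta^a+\epsilon_1\theta^b+\epsilon_2\theta^c+\epsilon_3=0$ with $\epsilon_3\theta^a+\epsilon_2\theta^{a-c}+\epsilon_1\theta^{a-b}+1=0$ and eliminating $\theta^a$ produces a new relation among fewer monomials of $\theta$; iterating this reduction drives $\theta$ to a binomial relation $\theta^{i}=\pm\theta^{j}$, forcing $\theta^{i-j}=\pm1$, a root of unity. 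It is essential that all coefficients be $\pm1$: the claim is false for general integer coefficients (for instance $x^2-3x+1$ is self-reciprocal with non-cyclotomic roots), so this step genuinely exploits the lacunary unit-coefficient structure, and the bookkeeping must be organized by the sign pattern $(\epsilon_1,\epsilon_2,\epsilon_3)$ and by coincidences among $a-b,\,a-c,\,b,\,c$.

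The heart of the matter is (ii), and I expect it to be the main obstacle. Suppose $B=B_1B_2$ were a nontrivial factorization into non-cyclotomic monic integer polynomials. Because $f$ has only four terms, the multiplicative relations among its roots are rigid, and I would exploit the reciprocal symmetry together with this lacunarity: any true split of the roots of $B$ into two Galois-stable, reciprocal-compatible sets forces strong constraints on the exponents $a>b>c>0$ and on the signs. Following the Ljunggren--Mills method behind the trinomial case in Proposition \ref{prop1}, I would first reduce to coprime exponents by the substitution $x\mapsto x^r$ with $r=\gcd(a,b,c)$ — this is exactly what introduces the trailing cyclotomic factor $x^{2r}+1$ seen in \eqref{3.1}--\eqref{3.4}, in analogy with the factor $x^{2\ell}+\epsilon^{b_1}\delta^{a_1}x^\ell+1$ of Proposition \ref{prop1} — and then carry out a finite case analysis on the reduced configuration. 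The labor is in showing that no factorization can occur \emph{except} in the four displayed families, which amounts to ruling out every remaining combination of exponents modulo small integers and every sign choice, typically via resultant or Newton-polygon computations; controlling the interaction between the substitution $x\mapsto x^r$ and the cyclotomic part is the delicate point.

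Finally, for the four exceptional forms I would confirm the displayed identities by direct multiplication, and then prove that each factor of degree $3r$ is irreducible. The decisive observation is that every such factor is itself a \emph{trinomial}: for example $x^{3r}+x^{2r}-1$ has reciprocal $x^{3r}-x^{r}-1$, for which $a=3r\geq 2r=2b$, $\ell=\gcd(3r,r)=r$, $a_1=3$, $b_1=1$, and $a_1+b_1=4\not\equiv 0\pmod 3$; Proposition \ref{prop1} then applies and yields irreducibility, and passing back through the reciprocal (which preserves irreducibility over $\mathbb{Q}$) settles the factor. The remaining degree-$3r$ factors in \eqref{3.1}--\eqref{3.4} are trinomials of the same shape, and after reciprocating where needed to arrange $a\geq 2b$ each satisfies $a_1+b_1\equiv 1\pmod 3$, so Proposition \ref{prop1} disposes of them all uniformly.
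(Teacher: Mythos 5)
First, a point of reference: the paper does not prove this proposition at all --- it is quoted verbatim from Mills \cite{Mi85} (building on Ljunggren \cite{Lj60}), so there is no in-paper proof to compare against; your attempt has to be measured against the actual argument in the literature. The parts of your sketch that are genuinely carried out are fine: the easy inclusion $A\mid\gcd(f,f^*)$ via $\Phi_n^*=\pm\Phi_n$, the observation that the coefficient structure $\pm1$ is essential (your example $x^2-3x+1$ is apt), the verification of the four displayed identities by multiplication, and the irreducibility of the degree-$3r$ factors by reciprocating where necessary and applying Proposition~\ref{prop1} with $a_1+b_1=4\not\equiv 0\pmod 3$. That last paragraph is complete and correct.

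The genuine gap is that the central claim --- $B(x)$ is irreducible outside the four listed families --- is never actually argued. You correctly identify it as ``the main obstacle'' and then describe only the \emph{shape} of a proof (``a finite case analysis \dots ruling out every remaining combination of exponents modulo small integers and every sign choice, typically via resultant or Newton-polygon computations''). This does not work as stated: Newton polygons see nothing when every coefficient is a unit, and resultants give no handle on an infinite family of exponent triples $(a,b,c)$. The missing idea, which is the engine of both Ljunggren's and Mills' arguments, is the norm identity: if $f=PQ$ over $\mathbb{Z}$, set $g=P^*Q$; then $gg^*=ff^*$, so the sum of squares of the coefficients of $g$ equals that of $f$, namely $4$. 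Hence $g$ is again a monic $\{\pm1\}$-quadrinomial, and the identity $ff^*=gg^*$ becomes a comparison of two products of quadrinomials (at most sixteen monomials each), which is what reduces the classification to finitely many exponent configurations and ultimately produces exactly the families \eqref{3.1}--\eqref{3.4}. Without this (or an equivalent device) your case analysis has no finite starting point. A secondary, smaller gap of the same kind: the reverse inclusion $\gcd(f,f^*)\mid A$, i.e.\ that a common root of $f$ and $f^*$ must be a root of unity, is asserted via ``eliminate $\theta^a$ and iterate to a binomial relation,'' but the iteration is not exhibited and is precisely the content of Proposition~\ref{prop3.3}, which the paper also only cites; as written it is a plausible plan, not a proof.
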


\begin{Prop}[\cite{Lj60}]\label{prop3.3}
	If $a=a_1t$, $b=b_1t$, $c=c_1t$, and $\gcd(a_1, b_1, c_1)=1$, $\gcd(a_1, b_1-c_1)=t_1$, $\gcd(b_1, a_1-c_1)=t_2$, $\gcd(c_1, a_1-b_1)=t_3$;  $\epsilon_1,\epsilon_2,\epsilon_3\in\{1,-1\}$ then all possible roots of unity of $f(x)=x^a+\epsilon_1x^b+\epsilon_2x^c+\epsilon_3$ are simple zeros, which are to be found among the zeros of
	\begin{align}\label{3.2}
		x^{tt_1}=\pm1,\;\;\;x^{tt_2}=\pm1,\;\;\;x^{tt_3}=\pm1.
	\end{align}
\end{Prop}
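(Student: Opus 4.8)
The plan is to study a root of unity $\zeta$ of $f$ through the observation that the four summands of $f(\zeta)$ all lie on the unit circle. Setting $X=\zeta^{a}$, $Y=\epsilon_1\zeta^{b}$, $Z=\epsilon_2\zeta^{c}$ and $W=\epsilon_3$, the equation $f(\zeta)=0$ becomes $X+Y+Z+W=0$ with $|X|=|Y|=|Z|=|W|=1$, and each of $X,Y,Z,W$ is itself a root of unity. First I would prove the elementary lemma that four unit--modulus numbers with vanishing sum split into two antipodal pairs: the first and third elementary symmetric functions of $X,Y,Z,W$ both vanish (the first by hypothesis, the third by taking complex conjugates in $X+Y+Z+W=0$ and using $\bar u=u^{-1}$), so these numbers are the four roots of an even quartic $z^{4}+pz^{2}+q$; since this polynomial is invariant under $z\mapsto -z$, its root multiset decomposes as $\{r,-r\}\cup\{s,-s\}$.

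There are exactly three ways to pair $\{X,Y,Z,W\}$ into antipodal pairs, and I would treat each separately. The pairing $\{X,Y\},\{Z,W\}$ forces $X=-Y$ and $Z=-W$, that is $\zeta^{a-b}=-\epsilon_1=\pm1$ and $\zeta^{c}=-\epsilon_3/\epsilon_2=\pm1$; consequently the order of $\zeta$ divides both $2(a-b)$ and $2c$, hence divides $2\gcd(a-b,c)$. Writing $a=a_1t$, $b=b_1t$, $c=c_1t$ gives $\gcd(a-b,c)=t\gcd(a_1-b_1,c_1)=t\,t_3$, so $\zeta^{t t_3}=\pm1$. The pairings $\{X,Z\},\{Y,W\}$ and $\{X,W\},\{Y,Z\}$ yield $\zeta^{t t_2}=\pm1$ and $\zeta^{t t_1}=\pm1$ by the same computation, using $\gcd(a-c,b)=t t_2$ and $\gcd(a,b-c)=t t_1$. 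This establishes that every root of unity of $f$ lies among the zeros of $x^{t t_1}=\pm1$, $x^{t t_2}=\pm1$, $x^{t t_3}=\pm1$.

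To prove simplicity I would argue by contradiction, assuming $\zeta$ is a multiple root, so that $f'(\zeta)=0$ as well; multiplying by $\zeta$ produces the additional relation $aX+bY+cZ=0$. Its coefficients are the positive integers $a>b>c$, so the triangle inequality applied to $aX=-(bY+cZ)$ immediately forces $a\le b+c$, which already rules out any multiple root when $a>b+c$. In the remaining range I would feed the identity $aX+bY+cZ=0$ back into the antipodal pattern found above and track the prescribed signs $\epsilon_1,\epsilon_2,\epsilon_3$ to exclude the equality cases. I expect this final step to be the crux of the argument: the borderline relation $a=b+c$ is precisely where the vector equations degenerate (they can collapse to $Y=Z=-X=-W$), so ruling out honest double roots there cannot be done by a soft modulus estimate alone and instead demands a careful, case-by-case sign analysis.
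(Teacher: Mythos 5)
The paper itself offers no proof of this Proposition --- it is imported verbatim from Ljunggren \cite{Lj60} --- so the only comparison available is with the classical argument. Your first half is that classical argument, carried out correctly and completely: the conjugation trick $\overline{e_1}=e_3/e_4$ shows $e_1=e_3=0$, so the four unit-modulus summands are the roots of an even quartic and split into two antipodal pairs, and your three pairings, with the gcd bookkeeping $\gcd(a-b,c)=tt_3$, $\gcd(a-c,b)=tt_2$, $\gcd(a,b-c)=tt_1$, give exactly the stated location of the roots of unity. No complaints there.

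The gap is the simplicity claim, which you leave as a program; and you should be aware that, read literally for arbitrary signs, the claim is \emph{false}, so no amount of sign-chasing will close it. Your own method finishes in three lines if you use both relations: from $aX=-(bY+cZ)$ you get $a\le b+c$, and from $aW=-\bigl((a-b)Y+(a-c)Z\bigr)$ (subtract $aX+bY+cZ=0$ from $a(X+Y+Z+W)=0$) you get $a\le(a-b)+(a-c)$, i.e.\ $b+c\le a$. Hence $a=b+c$, and the equality cases of both triangle inequalities force $Y=Z$ and $X=W=-Y$, which translates into $\epsilon_3=\epsilon_1\epsilon_2$ and $f(x)=(x^b+\epsilon_2)(x^c+\epsilon_1)$. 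In that configuration multiple roots of unity genuinely occur: $x^4+x^3+x+1=(x+1)^2(x^2-x+1)$ has the double root $-1$, which lies among the listed zeros ($t=1$, $t_3=1$) but is not simple. So the soft modulus estimate you distrusted is in fact all you need --- it just terminates in a genuine exceptional family ($a=b+c$, $\epsilon_3=\epsilon_1\epsilon_2$) that the statement, as transcribed, fails to exclude. This costs the present paper nothing, since its application has $\epsilon_1=\epsilon_2=1$, $\epsilon_3=-1$, so $\epsilon_3\ne\epsilon_1\epsilon_2$ and simplicity holds there; but you should either add the exclusion to the statement or restrict to those signs before claiming a proof.
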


\begin{theorem}\label{mainthm1}
Let $\boldsymbol{\alpha}=(\lambda^a,\lambda^b,\lambda^c)$ and
$\boldsymbol{\beta}=(\lambda^d,\lambda^e)$ be two contraction
vectors with $0< \lambda<1,\ a,b,c,d,e\in \Zp$. If
${\mathcal{D}}(\boldsymbol{\alpha})\sim
{\mathcal{D}}(\boldsymbol{\beta})$, then the polynomial
$f(x)=x^a+x^b+x^c-1$ is reducible.
\end{theorem}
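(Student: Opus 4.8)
The plan is to exploit the two algebraic necessary conditions in Theorem~\ref{Falconer necessary condition on lip.equ.} (Falconer--Marsh) applied to the hypothesis $\mathcal{D}(\boldsymbol{\alpha})\sim\mathcal{D}(\boldsymbol{\beta})$, and then translate the resulting field equality into a statement about the minimal polynomial of $\lambda^s$, where $s$ is the common Hausdorff dimension. Since $\boldsymbol{\alpha}=(\lambda^a,\lambda^b,\lambda^c)$, the dimension equation \eqref{dim. formula} reads $\lambda^{as}+\lambda^{bs}+\lambda^{cs}=1$; writing $u=\lambda^s$ this says exactly that $u$ is a root of $f(x)=x^a+x^b+x^c-1$. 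Likewise, from $\boldsymbol{\beta}=(\lambda^d,\lambda^e)$ we get $\lambda^{ds}+\lambda^{es}=1$, i.e.\ $u$ is a root of $g(x)=x^d+x^e-1$. So both $f$ and $g$ vanish at the common real number $u=\lambda^s\in(0,1)$, and the heart of the matter is to show that this forces $f$ to be reducible over $\mathbb{Q}$.

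**First I would** pin down the field generated by the $s$-th powers. Condition~(1) of Theorem~\ref{Falconer necessary condition on lip.equ.} gives $\mathbb{Q}(\lambda^{as},\lambda^{bs},\lambda^{cs})=\mathbb{Q}(\lambda^{ds},\lambda^{es})$. The key structural input is condition~(2): there exist $p,q\in\Zp$ with $\mathrm{sgp}(\lambda^{ap},\lambda^{bp},\lambda^{cp})\subset\mathrm{sgp}(\lambda^d,\lambda^e)$ and conversely. Because all generators are powers of the single real $\lambda$, membership in these semigroups is governed purely by the exponents: every element of $\mathrm{sgp}(\lambda^d,\lambda^e)$ has exponent in the numerical semigroup $\langle d,e\rangle\subset\Zp$ generated by $d$ and $e$, and similarly for $\boldsymbol{\alpha}$. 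Thus the semigroup inclusions become inclusions of numerical semigroups, from which I expect to extract a multiplicative/arithmetic relation tying $\{a,b,c\}$ to $\{d,e\}$ — concretely, that each of $pa,pb,pc$ lies in $\langle d,e\rangle$ and each of $qd,qe$ lies in $\langle a,b,c\rangle$.

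**The crux** is to deduce reducibility of $f$ from the coexistence of the two vanishing conditions together with these exponent relations. The natural mechanism is that $u=\lambda^s$ is simultaneously a root of $f$ and of $g$, so the minimal polynomial $m(x)$ of $u$ over $\mathbb{Q}$ divides $\gcd(f,g)$ in $\mathbb{Q}[x]$. If I can show $\deg m<\deg f=a$ while $m\mid f$, then $f$ has $m$ as a proper factor and is reducible. Since $g$ has degree $d$ (assuming $d>e$), one expects $\deg m\le d<a$; the content of condition~(2) is precisely what rules out the degenerate possibility $\deg m=a$, i.e.\ it prevents $f$ itself from being the minimal polynomial by forcing the exponents $a,b,c$ to be expressible through $d,e$ (up to the common factors $p,q$), thereby collapsing the degree. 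I would make this rigorous by comparing $[\mathbb{Q}(u):\mathbb{Q}]=\deg m$ against $a$ using the field equality from condition~(1) and the semigroup constraints from condition~(2).

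**The main obstacle** I anticipate is handling the auxiliary integers $p,q$ cleanly: the semigroup inclusions only give containment after raising to these powers, so $u$ need not itself lie in $\mathbb{Q}(\lambda^d,\lambda^e)$ — rather some power does. I therefore expect to argue via the common field $K=\mathbb{Q}(u)$ and show that the relation $\lambda^{ds}+\lambda^{es}=1$ already produces a polynomial of degree $<a$ annihilating $u$ over $\mathbb{Q}$, or else, if $f$ were irreducible (hence equal to $a\cdot m$ up to scalar), derive a contradiction with the field/semigroup equalities — most likely by a degree count showing $a=\deg f$ cannot equal $[\mathbb{Q}(u):\mathbb{Q}]$ once $u$ satisfies the lower-degree relation coming from $\boldsymbol{\beta}$. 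A secondary technical point is verifying that $f$ is not accidentally a constant or lacks a proper rational factor for small exponents; this should be dispatched by noting $f(0)=-1\ne 0$, $f(1)=2>0$, and that $f$ has a root in $(0,1)$, so degenerate factorizations do not occur. Propositions~\ref{prop1}, \ref{prop3.2} and~\ref{prop3.3} on explicit factorizations of trinomials and quadrinomials are presumably invoked in the sharper Theorem~\ref{main thm2}; for the present statement I expect the reducibility to follow from the degree/field argument alone rather than from those explicit factorization tables.
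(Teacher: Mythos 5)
Your setup is right: $u=\lambda^s$ is a common root of $f(x)=x^a+x^b+x^c-1$ and $g(x)=x^d+x^e-1$, and reducibility of $f$ would follow once you exhibit the minimal polynomial $m$ of $u$ as a \emph{proper} divisor of $f$. But the crux of your argument --- excluding the possibility that $f$ itself is irreducible --- is never actually carried out, and the mechanism you propose for it does not work. Falconer--Marsh condition (2) only yields containments of numerical semigroups of exponents (e.g.\ $pa,pb,pc\in\langle d,e\rangle$); this controls which powers of $\lambda$ occur multiplicatively and says nothing about $[\mathbb{Q}(u):\mathbb{Q}]$, so it cannot ``collapse the degree'' of $m$. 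Likewise your inequality $\deg m\le d<a$ presupposes $d<a$, which is not available at this stage: the comparison $\max\{a,b,c\}\ge\max\{d,e\}$ is Corollary \ref{cor-a>d}, itself extracted from the proof of this very theorem. You also explicitly set aside Propositions \ref{prop1}--\ref{prop3.3}, but Proposition \ref{prop1} is precisely the tool the paper uses here. Finally, the degenerate cases $d=e$ and $a=b=c$ (where the paper's ordering and trinomial arguments need separate treatment) are not addressed.

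For comparison, the paper's route is: after disposing of $d=e$ (which forces $(a,b,c)=(1,2,2)$ up to permutation, so $f=2x^2+x-1$ is reducible) and $a=b=c$ (impossible), it applies Ljunggren's Proposition \ref{prop1} to $g$ (or to its reciprocal when $d<2e$): either $g$ is irreducible, whence $g\mid f$ and $g\ne f$ give reducibility of $f$; or $g=(x^{2\ell}+\epsilon x^\ell+1)h$ with $h$ irreducible, and since the first factor has only roots of unity as zeros while $u\in(0,1)$, we get $h(u)=0$ and $h\mid f$; if $f$ were irreducible then $f=h$, and evaluating $g=(x^{2\ell}+\epsilon x^\ell+1)f$ at $x=1$ yields $1=(2+\epsilon)\cdot 2$, absurd. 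The missing step in your plan can in fact be filled even more cheaply: if $f$ were irreducible it would be, up to a scalar, the minimal polynomial of $u$, hence $f\mid g$ in $\mathbb{Q}[x]$ and therefore in $\mathbb{Z}[x]$ by Gauss's lemma ($f$ being primitive), so $f(1)=2$ would divide $g(1)=1$ --- a contradiction requiring neither the semigroup inclusions nor the field equality. Either way, some arithmetic input of this kind (an evaluation at $x=1$, or Ljunggren's structure theorem) is indispensable; the degree/field count alone does not close the argument.
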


\begin{proof}
If $d=e$,  then from Theorem~\ref{Raothm2}, we have $d|a,\ d|b, \ d|c.$  Hence we need only to consider the case that $\boldsymbol{\alpha}=(\lambda^a,\lambda^b,\lambda^c)$ and $\boldsymbol{\beta}=(\lambda,\lambda)$ by letting $d=e=1$. If $D(\boldsymbol{\alpha})\sim D(\boldsymbol{\beta})$, it follows from (\ref{dim. formula})
that the common Hausdorff dimension $s$ satisfies
$$2\lambda^s=1\quad \text{and}\quad\lambda^{sa}+\lambda^{sb}+\lambda^{sc}=1.$$  This implies that $2^{-a}+2^{-b}+2^{-c}=1$, and then $(a,b,c)=(1,2,2)$ or its
permutations. It concludes that $f(x)=2x^2+x-1$,  which is reducible.

Similarly, if $a=b=c$, it suffices to consider the case $\boldsymbol{\alpha}=(\lambda,\lambda,\lambda)$ and $\boldsymbol{\beta}=(\lambda^d,\lambda^e)$ by letting $a=b=c=1$. The Hausdorff dimension $s$ satisfies $$3\lambda^s=1\quad \text{and} \quad \lambda^{sd}+\lambda^{se}=1,$$ which is impossible since
$3^{-d}+3^{-e}<1$ for any $d,e\in \Zp$. Hence this case does not occur.

In the sequel, without loss of generality, we assume that $a\geq b> c$ and $d>e$. Let $g(x)= x^d+x^e-1$. Then $g(x)$ and $f(x)$ have the
same root $\lambda^s$. Obviously, if $g(x)$ is irreducible, then $g(x)|f(x)$ so that $f(x)$ is reducible. If $g(x)$ is reducible, we may consider the following two cases: in the case that $d\ge 2e$,  by Proposition~\ref{prop1},  then $g(x)=(x^{2\ell}+\epsilon x^{\ell}+1)h_1(x)$, where $\epsilon \in \{-1,1\},\ \ell=\gcd(d,e)$ and $h_1(x)$ is irreducible; in the case that $d< 2e$, we consider the reciprocal polynomial $-g^*(x)=-x^dg(x^{-1})=x^d-x^{d-e}-1$ which is reducible and thus has the form $-x^dg(x^{-1})=(x^{2\ell}+\epsilon x^{\ell}+1)h_2(x)$ so that $g(x^{-1})=(1+\epsilon x^{-\ell}+x^{-2\ell})(-x^{d-2\ell}h_2(x))$. In both cases we have
$$g(x)=(x^{2\ell}+\epsilon x^{\ell}+1)h(x)$$
where $h(x)$ is irreducible by Proposition \ref{prop1}.  Since all zeros of $x^{2\ell}\pm x^{\ell}+1$ are the roots of unity, we have $h(\lambda^s)=0$. It follows that $h(x)|f(x)$. If $f(x)$ is also irreducible, then $f(x)=h(x)$. Hence
\begin{eqnarray*}
x^d+x^e-1 = (x^{2\ell}+\epsilon x^{\ell}+1)(x^a+x^b+x^c-1).
% &=& (x^{2\ell}+\epsilon x^{\ell}+1)(x^a+x^b+x^c-1)
%\\
%&=& x^{a+2\ell}+x^{b+2\ell}+x^{c+2\ell}-x^{2\ell}+\epsilon
%x^{a+\ell} \\
%&& +\epsilon x^{b+\ell}+\epsilon x^{c+\ell}-\epsilon
%x^{\ell}+x^a+x^b+x^c-1.
\end{eqnarray*}
It is easy to check that for both $\epsilon=1$ and $\epsilon=-1$, the above two sides  are always not equal when we set $x=1$, which is a  contradiction. Therefore $f(x)$ must be reducible.
\end{proof}

From the proof of the theorem, it can be seen that

\begin{Cor}\label{cor-a>d}
Under the same assumption as above. If 	${\mathcal{D}}(\boldsymbol{\alpha})\sim
{\mathcal{D}}(\boldsymbol{\beta})$, then $\max\{a,b,c\}\geq \max\{d,e\}$.
\end{Cor}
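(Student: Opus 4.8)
The plan is to read the inequality off the minimal-polynomial analysis already carried out in the proof of Theorem \ref{mainthm1}, sharpening the degree bookkeeping. The degenerate cases go exactly as there: if $d=e$, then Theorem \ref{Raothm2} gives $d\mid a$, so $\max\{a,b,c\}\ge a\ge d=\max\{d,e\}$; and $a=b=c$ cannot occur. So I assume $a\ge b>c$ and $d>e$, put $t=\lambda^s\in(0,1)$, and let $h$ be the monic minimal polynomial of $t$ over $\mathbb{Q}$. Since $f(t)=g(t)=0$ for $f(x)=x^a+x^b+x^c-1$ and $g(x)=x^d+x^e-1$, we have $h\mid f$ and $h\mid g$; in particular $\deg h\le\deg f=a$.

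If $g$ is irreducible then $h=g$, so $d=\deg h\le a$ and we are done. The substance is the case, isolated in the proof of Theorem \ref{mainthm1}, where $g$ is reducible: there Proposition \ref{prop1} gives $g=(x^{2\ell}+\epsilon x^{\ell}+1)\,h$ with $\ell=\gcd(d,e)$, the quadratic factor cyclotomic and $h$ irreducible, so $\deg h=d-2\ell$ and the crude degree bound yields only $d\le a+2\ell$. To upgrade this to $a\ge d$ I would pass to cyclotomic parts. Assuming the exponents of $f$ are distinct (the coincident case $a=b$ reduces to the trinomial $2x^a+x^c-1$ and is handled separately), Proposition \ref{prop3.2} writes $f=A\cdot P$ with $A$ a product of cyclotomic factors and $P$ the part none of whose roots is a root of unity; away from the four explicit forms \eqref{3.1}--\eqref{3.4} the factor $P$ is irreducible, hence equals $h$, so $f=A\,h$ and $a-d=\deg A-2\ell$. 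Thus the corollary reduces to the single assertion $\deg A\ge 2\ell$, i.e. that $f$ has at least $2\ell$ roots of unity. The four exceptional forms are then dispatched directly, since there $f$ is completely factored and both $a$ and the admissible $d$ can be listed (this is the source of the $a=8$ family in Theorem \ref{main thm2}).

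The inequality $\deg A\ge 2\ell$ is the crux, and it cannot follow from degrees alone: equality $a=d$ genuinely occurs, as in $x^{5}+x^{4}+x^{3}-1=(x^{2}+1)(x^{3}+x^{2}-1)$ against $x^{5}+x-1=(x^{2}-x+1)(x^{3}+x^{2}-1)$, where $\deg A=\deg(x^{2\ell}+\epsilon x^{\ell}+1)=2$. To force $\deg A\ge 2\ell$ I would exploit the identity $(x^{2\ell}+\epsilon x^{\ell}+1)\,f(x)=(x^{d}+x^{e}-1)\,A(x)$, which follows from $A\,g=f\,(x^{2\ell}+\epsilon x^\ell+1)$: its right-hand side is the superposition $x^{d}A+x^{e}A-A$ of three shifted copies of one cyclotomic polynomial, while its left-hand side is a product of two very sparse, sign-constrained polynomials. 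Supposing $\deg A<2\ell$ and matching coefficients from the top downward — using that $f$ has nonnegative intermediate coefficients and constant term $-1$, with Proposition \ref{prop3.3} locating where roots of unity of $f$ can lie — should produce a forbidden (negative or over-large) coefficient, a structured refinement of the $x=1$ evaluation that closes the proof of Theorem \ref{mainthm1}. Should this sparsity bookkeeping prove insufficient in borderline configurations, the semigroup inclusions of Theorem \ref{Falconer necessary condition on lip.equ.}$(2)$, which encode the full Lipschitz equivalence rather than mere equality of dimension, are the natural extra input. I expect this coefficient analysis in the reducible case to be the sole delicate point; the reduction and the irreducible case are routine.
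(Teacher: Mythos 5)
Your reduction is set up correctly and, in fact, more carefully than the paper's own treatment: the paper derives this corollary purely by inspection of the proof of Theorem~\ref{mainthm1} (``from the proof of the theorem, it can be seen that\dots''), the visible content being that when $g(x)=x^d+x^e-1$ is irreducible it properly divides $f$, whence $d\le a$, and that the degenerate cases $d=e$ and $a=b=c$ are already settled there. You handle those parts the same way. But your argument does not close: in the case where $g$ is reducible you yourself isolate the crux --- that $f=A\cdot h$ with $\deg h=d-2\ell$ forces only $d\le a+2\ell$, and that $a\ge d$ is equivalent to $f$ having at least $2\ell$ roots of unity ($\deg A\ge 2\ell$) --- and then you do not prove it. The coefficient-matching on $(x^{2\ell}+\epsilon x^{\ell}+1)f(x)=(x^{d}+x^{e}-1)A(x)$ is announced with ``should produce a forbidden coefficient'' and ``I expect,'' with a further fallback to the Falconer--Marsh semigroup conditions that is also not executed; the dispatch of the four exceptional quadrinomial forms of Proposition~\ref{prop3.2} is likewise only asserted. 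A conditional plan with an acknowledged unverified central step is not a proof, and here the unverified step is exactly where all the difficulty lives: your own example $x^{5}+x^{4}+x^{3}-1$ versus $x^{5}+x-1$ shows the bound is tight, so no soft degree or evaluation argument can substitute for the missing analysis.

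Two further cautions. First, note that when $g$ is reducible, Proposition~\ref{prop1} forces $d_1=d/\ell\ge 5$, so $\deg h=(d_1-2)\ell\ge 3\ell$; any completion of your argument should exploit this arithmetic constraint on $(d,e)$ (both $d/\ell$ and $e/\ell$ odd with sum divisible by $3$) rather than treating $\ell$ as free, since otherwise the target $\deg A\ge 2\ell$ is not even plausible uniformly. Second, your case $a=b$ is waved off as ``handled separately'' but Proposition~\ref{prop3.2} genuinely does not apply to $2x^{a}+x^{c}-1$, so that branch also needs an actual argument. As it stands, the proposal correctly diagnoses why the corollary is not an immediate consequence of degree bookkeeping, but it does not prove the corollary.
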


The following is a  sufficient condition for the irreducibility of quadrinomials.

\begin{Lem}[\cite{Du04,Lj60}]\label{prop2}
	Let $a,b,c$ be three distinct positive integers. If they are all
	odd, then the polynomial $f(x)=x^a+x^b+x^c\pm 1$ is irreducible over
	${\mathbb Q}$.
\end{Lem}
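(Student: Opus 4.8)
The plan is to reduce the two sign cases to a single one, peel off the cyclotomic part with Proposition~\ref{prop3.2}, and then show that $f$ has no root of unity among its zeros. First I would dispose of the sign. Since $a,b,c$ are all odd,
$$f(-x)=-x^a-x^b-x^c\pm1=-\bigl(x^a+x^b+x^c\mp1\bigr),$$
so the substitution $x\mapsto -x$ interchanges $x^a+x^b+x^c+1$ and $x^a+x^b+x^c-1$ up to a unit. As $x\mapsto -x$ is an automorphism of ${\mathbb Q}[x]$, irreducibility of one is equivalent to irreducibility of the other, and it suffices to treat $f(x)=x^a+x^b+x^c-1$. I also relabel so that $a>b>c$, which is harmless by the symmetry of $f$ in its three exponents.

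Next I would apply Proposition~\ref{prop3.2} with $(\epsilon_1,\epsilon_2,\epsilon_3)=(1,1,-1)$, writing $f=AB$ where every root of $A$ is a root of unity, no root of $B$ is, and $A=\gcd(f,f^*)$. Proposition~\ref{prop3.2} guarantees that $B$ is irreducible unless $f$ coincides with one of the four exceptional quadrinomials listed there. Each of those four has leading term $x^{8r}$, i.e. an even top exponent (and two of them carry a sign pattern different from $(1,1,-1)$). Since the top exponent of our $f$ is $a$, which is odd, $f$ cannot be any of the exceptional forms; hence $B$ is irreducible. It therefore remains to prove that $A$ is a constant, i.e. that $f$ has no root of unity as a zero, for then $f=cB$ is irreducible.

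This last step is the crux. Suppose $\zeta$ is a root of unity with $f(\zeta)=0$, and let $n$ be its order. Then $\zeta^a+\zeta^b+\zeta^c+(-1)=0$ exhibits $0$ as a sum of four roots of unity; by the classical structure of such short vanishing sums, the four terms split into two antipodal pairs. The term $-1$ must lie in one pair, and its partner $\zeta^j$ (with $j\in\{a,b,c\}$) then satisfies $\zeta^j=1$, forcing $n\mid j$ and hence $n$ odd because $j$ is odd. The remaining two terms $\zeta^k,\zeta^l$ form the other antipodal pair, so $\zeta^{k-l}=-1$, which requires $-1\in\langle\zeta\rangle$ and therefore $n$ even, a contradiction. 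The same conclusion can be reached through Proposition~\ref{prop3.3}: writing $a=a_1t$, $b=b_1t$, $c=c_1t$, the oddness of $a,b,c$ makes $t$ and $a_1,b_1,c_1$ odd, whence $t_1,t_2,t_3$ and the exponents $tt_1,tt_2,tt_3$ appearing there are all odd, and one checks that no zero of $x^{tt_i}=\pm1$ can satisfy $f$. Thus $f$ has no root-of-unity zero, $A$ is constant, and $f$ is irreducible.

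The main obstacle is precisely this non-vanishing at roots of unity: it is the only place where the hypothesis that all three exponents are odd is genuinely essential (the parity of the top exponent is a side benefit, used only to exclude the exceptional forms). The delicate point is to justify rigorously that a four-term vanishing sum of roots of unity must decompose into two antipodal pairs, or, equivalently, to extract from Proposition~\ref{prop3.3} that every candidate root of unity lies on an odd-order circle $x^{\mathrm{odd}}=\pm1$ and then to rule it out by the parity clash between $\zeta^j=1$ (odd $n$) and $\zeta^{k-l}=-1$ (even $n$).
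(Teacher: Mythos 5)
This lemma is quoted in the paper from Ljunggren and Dubickas with no proof supplied, so there is no in‑text argument to compare against; what matters is whether your reconstruction stands on its own, and it does. The sign reduction via $x\mapsto -x$ is valid exactly because $a,b,c$ are odd; the appeal to Proposition~\ref{prop3.2} correctly disposes of the four exceptional factorizations, since each has even leading exponent $8r$ while your leading exponent is odd; and the remaining task of showing $A$ is constant is handled by a sound argument. The one step you rightly flag as delicate --- that a vanishing sum of four roots of unity must split into two antipodal pairs --- is a true classical fact: by Mann's theorem a minimal vanishing sum of four roots of unity would, after a common rotation, consist of sixth roots of unity, and a short computation in $\mathbb{Z}[\omega]$ shows no such minimal sum exists, forcing the $2+2$ decomposition; granting this, the parity clash between $\zeta^j=1$ with $j$ odd (odd order) and $\zeta^{k-l}=-1$ (even order) finishes the proof. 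Your alternative route through Proposition~\ref{prop3.3} is essentially Ljunggren's original argument and also closes, though it needs the details you elide: with $a,b,c$ odd each $tt_i$ is odd, and in either case $\eta^{tt_1}=\pm1$ one gets $\eta^{b-c}=1$ (the quotient $(b-c)/(tt_1)$ is even) together with $\eta^a=\pm1$, whence $f(\eta)=2\eta^b$ or $2(\eta^b-1)$, and the latter cannot vanish because an element of even order cannot satisfy $\eta^b=1$ with $b$ odd. Either version is a legitimate proof; the second is closer to the cited sources, the first is more self-contained but rests on the vanishing-sum structure theorem, which you should cite or prove explicitly rather than invoke as ``classical.''
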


Suppose  $\text{gcd}(a,b,c)=2^km$, where $m$ is odd. Define
$$a'=a/{2^k},\ b'=b/{2^k}, \  c'=c/{2^k}$$ and
$$\bar{a}= \text{gcd}(a',b'-c'), \  \bar{b}= \text{gcd}(b',a'-c'),
\   \bar{c}= \text{gcd}(c',a'-b').$$  
We have a simple criterion of the irreducibility of quadrinomials.

\begin{Lem}[\cite{FiJo06}]\label{prop3}
	If $f(x)=x^a+x^b+x^c-1$, then  $f(x)$ is irreducible over ${\mathbb
		Q}$ if and only if $a'\ne 0\ (\text{mod}\ 2\bar{a}),\ b'\ne 0\
	(\text{mod}\ 2\bar{b}),\ c'\ne 0\ (\text{mod}\ 2\bar{c}).$
\end{Lem}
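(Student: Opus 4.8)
The plan is to reduce the irreducibility of $f(x)=x^a+x^b+x^c-1$ to the \emph{absence of a cyclotomic factor}, and then to translate that condition into the stated congruences by bookkeeping of $2$-adic valuations. After permuting the three exponents (which is harmless since they all carry coefficient $+1$), I assume $a>b>c>0$, so $\deg f=a\geq 3$. Writing $f=AB$ as in Proposition~\ref{prop3.2}, with $A$ collecting all root-of-unity zeros and $B$ the remaining factor, $B$ is irreducible unless $f$ is one of the exceptional forms. The exceptional forms with our sign pattern $x^a+x^b+x^c-1$ are \eqref{3.1}, \eqref{3.3}, \eqref{3.4}, and each of them contains the cyclotomic factor $x^{2r}+1$. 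Hence if $f$ has no root of unity among its zeros, then $A=1$, $f$ is not exceptional, and $f=B$ is irreducible. Conversely, a root-of-unity zero produces a proper cyclotomic factor: $f$ is not itself cyclotomic since its constant term $-1$ would force degree $1$. Thus $f$ is irreducible if and only if $f$ has no root of unity as a zero.

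Next I characterize the root-of-unity zeros. A root of unity $\zeta$ is a zero of $f$ precisely when $\zeta^a+\zeta^b+\zeta^c=1$, i.e.\ when the four roots of unity $\zeta^a,\zeta^b,\zeta^c,-1$ sum to zero. By the classification of vanishing sums of four roots of unity (equivalently, by Proposition~\ref{prop3.3}, which already confines such $\zeta$ to $\zeta^{\gcd(a,b-c)},\zeta^{\gcd(b,a-c)},\zeta^{\gcd(c,a-b)}\in\{\pm1\}$), such a sum must split into two antipodal pairs. Since $-1\neq 0$ it must pair with one of $\zeta^a,\zeta^b,\zeta^c$, leaving exactly one of the alternatives (i) $\zeta^a=1,\ \zeta^{b-c}=-1$; (ii) $\zeta^b=1,\ \zeta^{a-c}=-1$; (iii) $\zeta^c=1,\ \zeta^{a-b}=-1$. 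So $f$ is reducible if and only if one of (i)--(iii) is solvable in roots of unity.

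I then translate each alternative into a congruence; by symmetry it suffices to treat (i). A root of unity $\zeta$ of order $n$ satisfies $\zeta^a=1$ and $\zeta^{b-c}=-1$ exactly when $n\mid a$, $n$ is even, and $b-c\equiv n/2 \pmod{n}$; the last condition forces $v_2(n)=v_2(b-c)+1$ and requires the odd part of $n$ to divide $b-c$. Taking the odd part of $n$ to be $1$ shows that such a $\zeta$ exists if and only if $2^{\,v_2(b-c)+1}\mid a$, i.e.\ $v_2(b-c)<v_2(a)$. Finally I unwind the definitions: with $2^k\|\gcd(a,b,c)$ one has $v_2(a')=v_2(a)-k$ and $v_2(b'-c')=v_2(b-c)-k$, while $v_2(\bar a)=\min\{v_2(a'),v_2(b'-c')\}$ because $\bar a=\gcd(a',b'-c')$. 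Since $\bar a\mid a'$, the divisibility $2\bar a\mid a'$ is equivalent to $v_2(a')>v_2(\bar a)$, hence to $v_2(b'-c')<v_2(a')$, hence to $v_2(b-c)<v_2(a)$. Therefore (i) is solvable iff $a'\equiv 0 \pmod{2\bar a}$, and symmetrically (ii)$\iff b'\equiv 0 \pmod{2\bar b}$, (iii)$\iff c'\equiv 0 \pmod{2\bar c}$. Combining with the first paragraph, $f$ is irreducible iff none of these congruences holds, which is the assertion.

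I expect the pairing step of the second paragraph to be the main obstacle: one must justify rigorously that a vanishing sum of four roots of unity consists of two antipodal pairs, which means ruling out a genuinely $4$-term minimal relation and dealing with possible coincidences among $\zeta^a,\zeta^b,\zeta^c$. I would dispatch this either through Mann's theorem on vanishing sums (forcing the relevant roots to be $6$th roots of unity, after which a finite check excludes any minimal $4$-term relation), or, staying within the tools of the paper, by relying on the localization provided by Proposition~\ref{prop3.3} together with a short case analysis. Once the pairing is established, the $2$-adic computation in the third paragraph is routine, but it must be carried out carefully to match the asymmetric definitions $a'=a/2^k$ and $\bar a=\gcd(a',b'-c')$.
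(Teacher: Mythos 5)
The paper does not prove this lemma at all: it is quoted verbatim from Finch--Jones \cite{FiJo06}, so there is no internal proof to compare against. Your argument is, in substance, a correct reconstruction of the source's proof, and the overall chain is sound: (a) the reduction ``$f$ irreducible $\iff$ $f$ has no root of unity as a zero'' is valid (the paper even quotes this equivalence from \cite{Lj60} just before Theorem~\ref{mainthm2}, so you could cite it instead of rederiving it from Proposition~\ref{prop3.2}; note also that your rederivation implicitly needs $a,b,c$ distinct, as in Lemma~\ref{prop2}, and your ``constant term $-1$ forces degree $1$'' remark is really the observation that an irreducible monic $f$ with a root-of-unity zero would be $\Phi_n$, whose constant term is $1$ for $n\ge 2$); (b) the translation of each alternative into $v_2(b-c)<v_2(a)$ etc., and the final unwinding to $2\bar a\mid a'$ via $v_2(\bar a)=\min\{v_2(a'),v_2(b'-c')\}$, are both correct. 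The one step you leave open --- that a vanishing sum $\zeta^a+\zeta^b+\zeta^c+(-1)=0$ of four roots of unity splits into two antipodal pairs --- is a genuine obligation, but it is standard and closable exactly as you indicate: by Mann's theorem a minimal four-term relation would live (after rotation) in $\mu_6$, and writing a four-term sum in $\mu_6$ as $n_1+n_2\omega+n_3\omega^2$ with $1+\omega+\omega^2=0$ forces $n_1=n_2=n_3=t$ with $3t$ equal to the (even) signed count of four terms, hence $t=0$ and the multiset pairs off antipodally. With that inserted, your proof is complete; it is more self-contained than the paper, which simply defers to \cite{FiJo06}.
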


Following the above notation and combining Theorem \ref{mainthm1}, Lemmas \ref{prop2} and \ref{prop3}, we obtain an easy way  to verify that two Cantor sets are non-Lipschitz equivalent.

\begin{theorem}\label{thm2}
	Let $\boldsymbol{\alpha}=(\lambda^a,\lambda^b,\lambda^c)$ and
	$\boldsymbol{\beta}=(\lambda^d,\lambda^e)$ be two contraction
	vectors with $0< \lambda<1,\ a,b,c,d,e\in \Zp$. Then
	${\mathcal{D}}(\boldsymbol{\alpha})\nsim{\mathcal{D}}(\boldsymbol{\beta})$
	if any one of the following conditions holds:
	
	(1) $a,b,c$ are odd;
	
	(2) $a'\ne 0\ (\text{mod}\ 2\bar{a}),\ b'\ne 0\ (\text{mod}\
	2\bar{b}),\ c'\ne 0\ (\text{mod}\ 2\bar{c})$.
\end{theorem}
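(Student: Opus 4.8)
The plan is to argue by contraposition against Theorem \ref{mainthm1}. That theorem asserts that Lipschitz equivalence ${\mathcal{D}}(\boldsymbol{\alpha})\sim{\mathcal{D}}(\boldsymbol{\beta})$ forces the quadrinomial $f(x)=x^a+x^b+x^c-1$ to be reducible over ${\mathbb Q}$. Hence, to conclude ${\mathcal{D}}(\boldsymbol{\alpha})\nsim{\mathcal{D}}(\boldsymbol{\beta})$, it suffices in each case to prove that $f$ is \emph{irreducible}: since reducibility is a necessary condition for equivalence, establishing irreducibility immediately rules the equivalence out. Thus both parts reduce to quoting the appropriate irreducibility criterion for $f$.

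For part (1), I would apply Lemma \ref{prop2} directly. When $a,b,c$ are distinct and all odd, that lemma already states that $f(x)=x^a+x^b+x^c-1$ (taking the $-1$ sign) is irreducible over ${\mathbb Q}$. Consequently $f$ is irreducible, and the contrapositive of Theorem \ref{mainthm1} yields the desired non-equivalence.

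For part (2), I would invoke Lemma \ref{prop3}, which is an exact criterion: with the notation $a',b',c'$ and $\bar a,\bar b,\bar c$ introduced just before it, $f$ is irreducible over ${\mathbb Q}$ \emph{if and only if} $a'\ne 0\ (\mathrm{mod}\ 2\bar a)$, $b'\ne 0\ (\mathrm{mod}\ 2\bar b)$, and $c'\ne 0\ (\mathrm{mod}\ 2\bar c)$. Condition (2) is precisely this hypothesis, so $f$ is irreducible and Theorem \ref{mainthm1} again forces ${\mathcal{D}}(\boldsymbol{\alpha})\nsim{\mathcal{D}}(\boldsymbol{\beta})$. Note that the congruence conditions are symmetric in $a,b,c$, so the ordering assumption $a\ge b>c$ used inside the proof of Theorem \ref{mainthm1} causes no difficulty here.

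The argument is essentially a bookkeeping combination of the three cited results, so there is no substantial analytic obstacle; the only points requiring care are the degenerate situations. Specifically, (a) when two or three of $a,b,c$ coincide, $f$ is no longer a genuine quadrinomial, so one should confirm that the hypotheses are consistent with such collapses — for instance, condition (1) tacitly requires $a,b,c$ distinct, exactly as Lemma \ref{prop2} does; and (b) one must verify that the symbols $a',\bar a,\dots$ appearing in condition (2) match those of Lemma \ref{prop3} verbatim, so that its ``if and only if'' applies on the nose. Once these consistency checks are in place, the theorem follows at once.
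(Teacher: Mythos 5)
Your proposal is correct and is exactly the paper's argument: the paper gives no separate proof, merely noting that the theorem follows by combining Theorem \ref{mainthm1} (equivalence forces $f$ reducible) with the irreducibility criteria of Lemmas \ref{prop2} and \ref{prop3}, which is precisely your contrapositive argument. Your added remark that condition (1) tacitly requires $a,b,c$ distinct is a fair point of care that the paper itself glosses over.
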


Let $a>b>c>0$ be integers; $\beta, \gamma,\delta\in \{-1,1\}$; and let $f(x)=x^a+\beta x^b+\gamma x^c+\delta$ be a  quadrinomial. It is shown  in \cite{Lj60} that $f(x)$ is reducible over $\mathbb{Q}$ if and only if $f(\eta)=0$ for some root of unity $\eta$. By using this, finally  we can prove our second main result.

\begin{theorem}\label{mainthm2}
Let $\boldsymbol{\alpha}=(\lambda^{a},\lambda^{b},\lambda^{c})$, $\boldsymbol{\beta}=(\lambda^{d},\lambda^{e})$ be two contraction vectors where $0<\lambda<1$,   $a=b+c$, $\gcd(a,b,c)=1$.  Then
$\mathcal D(\boldsymbol{\alpha})\sim\mathcal D(\boldsymbol{\beta})$  if and only if  $a=4$ with $\{b,c\}=\{1,3\}$ and $\{d,e\}=\{1,2\}$; or $a=8$ with $\{b,c\}=\{1,7\}$ and $\{d,e\}=\{1,5\}$, $\{2,3\}$.
\end{theorem}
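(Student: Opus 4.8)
The plan is to leverage the structural hypothesis $a=b+c$ together with Proposition~\ref{prop3.2} (Mihailescu/Ljunggren classification of reducible quadrinomials), since Theorem~\ref{mainthm1} already tells us that $\mathcal D(\boldsymbol\alpha)\sim\mathcal D(\boldsymbol\beta)$ forces $f(x)=x^a+x^b+x^c-1$ to be reducible. First I would invoke the remark preceding the statement: for a quadrinomial $x^a+\beta x^b+\gamma x^c+\delta$ with signs in $\{-1,1\}$, reducibility over $\mathbb Q$ is equivalent to the existence of a root of unity $\eta$ with $f(\eta)=0$. So the first step is to determine exactly which triples $(a,b,c)$ with $a=b+c$ and $\gcd(a,b,c)=1$ yield an $f$ possessing a cyclotomic root. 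I would apply Proposition~\ref{prop3.3} with $t=\gcd(a,b,c)=1$ (so $a=a_1,b=b_1,c=c_1$) to locate the candidate roots of unity among the zeros of $x^{t_1}=\pm1$, $x^{t_2}=\pm1$, $x^{t_3}=\pm1$, where $t_1=\gcd(a,b-c)$, $t_2=\gcd(b,a-c)$, $t_3=\gcd(c,a-b)$. Using $a=b+c$ these simplify: $a-c=b$, $a-b=c$, $b-c=2c-a$, so $t_2=\gcd(b,b)=b$ and $t_3=\gcd(c,c)=c$, which is a useful reduction.

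The second step is to test which primitive roots of unity can actually satisfy $f(\eta)=0$. Writing $\eta$ of order dividing the relevant $t_i$ and substituting $a=b+c$, I expect the equation $\eta^{b+c}+\eta^b+\eta^c-1=0$ to factor or constrain $\eta$ severely; in particular evaluating at low-order roots ($\eta=\pm1$, $\eta=\pm i$, primitive cube/sixth roots) and matching real and imaginary parts should pin down the finitely many admissible $(a,b,c)$. I anticipate that the only surviving cases are precisely those appearing explicitly in Proposition~\ref{prop3.2}: the reduction $x^{8}+x^{7}+x-1$ (taking $r=1$ in \eqref{3.1}) and $x^4+x^3+x-1$, the latter being a lower-degree instance one checks directly factors as $(x^2+1)(x^2+x-1)$. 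These correspond to $a=8,\{b,c\}=\{1,7\}$ and $a=4,\{b,c\}=\{1,3\}$ respectively, matching the claimed list.

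The third step is the converse and the determination of $\{d,e\}$. Given an admissible $(a,b,c)$, the common dimension $s$ satisfies $2^{-d}\cdot(\text{scaled})$ conditions coming from $\lambda^{sd}+\lambda^{se}=1$ alongside $\lambda^{sa}+\lambda^{sb}+\lambda^{sc}=1$; setting $u=\lambda^s$ I would solve $u^d+u^e=1$ where $u$ is the real root in $(0,1)$ of $x^a+x^b+x^c=1$. Concretely, from the factorizations in Proposition~\ref{prop3.2} the genuine (non-cyclotomic) irreducible factor is a \emph{trinomial} of the form $x^{3r}\pm x^{\cdot}\mp1$, and I would match $u^d+u^e-1=0$ against that factor to read off $\{d,e\}$: for $a=4$ one gets $u^2+u-1=0$ giving $\{d,e\}=\{1,2\}$, and for $a=8$ the cubic factors $x^3+x^2-1$ and $x^3-x+1$ (after a sign/reciprocal adjustment) yield the two solutions $\{1,5\}$ and $\{2,3\}$. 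Conversely, for each listed pair one verifies $\boldsymbol\alpha\sim\boldsymbol\beta$ directly by exhibiting a partition realizing the derivation, whence $\mathcal D(\boldsymbol\alpha)\sim\mathcal D(\boldsymbol\beta)$ by the trivial implication noted in the introduction.

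The main obstacle will be the second step: ruling out all the infinitely many triples $(a,b,c)$ with $a=b+c$ whose quadrinomial is irreducible, i.e.\ showing that \emph{only} the four forms of Proposition~\ref{prop3.2} (plus the small $a=4$ case) survive under the extra constraints $a=b+c$ and $\gcd=1$. The delicate point is that Proposition~\ref{prop3.2} is stated with general signs $\epsilon_i$, whereas here the signs are fixed as $(+,+,-)$; I must check that imposing $a=b+c$ is compatible only with form~\eqref{3.1} and not with \eqref{3.2}--\eqref{3.4}, and separately handle the degenerate low-degree situations where the proposition's hypothesis $a>b>c>0$ with the cyclotomic/non-cyclotomic split behaves differently. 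Care is also needed because once $s$ is determined, one must confirm that the resulting $\lambda$ actually gives genuine contraction vectors (the summability condition $\sum\alpha_i^d<1$) and that no spurious algebraic solution for $\{d,e\}$ is introduced.
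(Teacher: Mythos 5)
Your overall architecture (reducibility via Theorem~\ref{mainthm1}, locating cyclotomic roots via Proposition~\ref{prop3.3} with $t_2=b$, $t_3=c$, then reading off $\{d,e\}$ from the non-cyclotomic factor) is the same as the paper's, but there is a genuine gap at your second step. You claim that testing low-order roots of unity against $\eta^{b+c}+\eta^b+\eta^c-1=0$ will ``pin down the finitely many admissible $(a,b,c)$,'' leaving only $(8,7,1)$ and $(4,3,1)$. That is false: writing $f(x)=(x^b+1)(x^c+1)-2$, one checks $f(i)=0$ whenever $b,c$ are both odd with $\{b \bmod 4,\,c\bmod 4\}=\{1,3\}$, so $x^2+1$ divides $f$ for \emph{infinitely} many admissible triples, e.g.\ $(a,b,c)=(8,5,3)$ or $(12,7,5)$. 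Reducibility of $f$ is only a necessary condition; it does not by itself isolate the two triples in the statement. The essential content of the necessity proof --- which your outline omits --- is what happens \emph{after} one knows $f=A(x)B(x)$ with $A(x)\in\{x+1,\ x^2+1,\ (x+1)(x^2+1)\}$ and $B(x)$ irreducible: one must equate $B(x)$ with $g(x)=x^d+x^e-1$ (or, when $g$ is reducible, with its non-cyclotomic factor $h(x)$ from Proposition~\ref{prop1}), and then carry out a term-by-term comparison of the resulting polynomial identities (splitting on $e<c$, $e=c$, $e>c$, and using $a\ge d$ to kill the reducible-$g$ subcases by a degree count). It is precisely this matching that forces $(a,b,c,d,e)=(4,3,1,2,1)$ and eliminates every other reducible triple such as $(8,5,3)$; without it your argument cannot conclude.

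A secondary, smaller issue: your treatment of $\{d,e\}$ for $a=8$ glosses over the fact that $\{d,e\}=\{1,5\}$ arises from a \emph{reducible} $g$ (namely $x^5+x-1=(x^2-x+1)(x^3+x^2-1)$), so one must also run the Proposition~\ref{prop1} factorization of $g$ and show the cyclotomic cofactor $x^{2\ell}+\epsilon x^{\ell}+1$ is forced to be $x^2-x+1$; asserting that the cubic factors of $x^8+x^7+x-1$ ``yield'' $\{1,5\}$ and $\{2,3\}$ hides exactly the computation needed to show no further pairs $\{d,e\}$ occur. The sufficiency direction you sketch (exhibiting explicit derivations of contraction vectors) is fine and is what the paper does.
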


\begin{proof}
First we prove the sufficient part. Iterating the $\lambda^2$ term in $({\lambda}^2,{\lambda})$, we obtain the contraction vector $({\lambda}^4,{\lambda}^3,{\lambda})$. Thus $\mathcal D({\lambda}^4,{\lambda}^3,{\lambda})\sim\mathcal D({\lambda}^2,{\lambda})$. Iterating the $\lambda$ term in $({\lambda}^8,{\lambda}^7,{\lambda})$, we obtain $\mathcal D({\lambda}^8,{\lambda}^7,{\lambda})\sim\mathcal D({\lambda}^8,{\lambda}^7,{\lambda}^9,{\lambda}^8,{\lambda}^2)$. Iterating the $\lambda^3$ term in $({\lambda}^3,{\lambda}^2)$ twice yields
\begin{equation*}
  \mathcal D ({\lambda}^3,{\lambda}^2) \sim \mathcal D({\lambda}^6,{\lambda}^5,{\lambda}^2) \sim  \mathcal D({\lambda}^9,{\lambda}^8,{\lambda}^8,{\lambda}^7,{\lambda}^2)
\end{equation*}
so that $\mathcal D({\lambda}^8,{\lambda}^7,{\lambda})\sim\mathcal D({\lambda}^3,{\lambda}^2)$. By Theorem~\ref{Raothm1}, we know that $\mathcal D({\lambda}^5,{\lambda})\sim\mathcal D({\lambda}^3,{\lambda}^2)$. Thus $\mathcal D({\lambda}^8,{\lambda}^7,{\lambda})\sim\mathcal D({\lambda}^5,{\lambda})$.

Now we prove the necessary part. From $a=b+c, \  \gcd(a,b,c)=1$, we know $b\ne c$. Without loss of generality we may assume $a>b>c$. If $d=e$,  by the proof of Theorem \ref{mainthm1}, then  $(a,b,c)$ should be $(1,2,2)$ or its permutations, contradicting the assumption of $a=b+c$. Hence we may assume $d>e$. It follows from Corollary \ref{cor-a>d} that $a\geq d$.

Define $f(x)=x^{a}+x^{b}+x^{c}-1$ and $g(x)=x^d+x^e-1$. If $\mathcal D(\boldsymbol{\alpha})\sim\mathcal D(\boldsymbol{\beta})$, then $g(x)$ and $f(x)$ have the same root $\lambda^{s}$ where $s$ is the common Hausdorff dimension. Moreover, by Theorem \ref{mainthm1},  $f(x)$ is reducible. Hence $f(x)=A(x)B(x)$ as in Proposition \ref{prop3.2} where every root of $A(x)$ and no root of $B(x)$ is a root of unity, $B(x)$ is irreducible except when $f(x)$ is one of the four forms \eqref{3.1}-\eqref{3.4}.

\bigskip
	
{\it\textbf{Case 1.}}  Suppose $B(x)$ is reducible. From $\gcd(a,b,c)=1$ and Proposition~\ref{prop3.2}, we have $$f(x)=x^{8}+x^{7}+x-1=(x^{2}+1)(x^{3}+x^{2}-1)(x^{3}-x+1).$$
If $g(x)$ is irreducible, then $g(x)=x^{3}+x^{2}-1$, so we have $(a,b,c,d,e)=(8,7,1,3,2)$. If $g(x)$ is reducible, then by Proposition \ref{prop1},  $g(x)=(x^{2\ell}+\epsilon x^\ell+1)h(x)$, where $\epsilon \in \{-1,1\}$, $\ell=\gcd(d,e)$ and $h(x)$ is irreducible. Since all zeros of $x^{2\ell}\pm x^\ell+1$ are the roots of unity, we have $h(\lambda^s)=0$. It follows that $h(x)|f(x)$. Since $x^2+1$ and $x^{3}-x+1$ have no roots in $(0,1)$, we have $h(x)|x^{3}+x^{2}-1$. Hence $h(x)=x^{3}+x^{2}-1$ by  the irreducibility of $x^{3}+x^{2}-1$ over the rationals. Therefore,
	\begin{align*}
		x^d+x^e-1 = (x^{2\ell}+\epsilon x^\ell+1)(x^{3}+x^{2}-1).
	\end{align*}
By letting $x=1$, it can be easily seen that $\epsilon=-1$. This yields $d=3+2\ell$ and
$$x^e =x^{2+2\ell}-x^{2\ell}-x^{3+\ell}-x^{2+\ell}+x^{\ell}+x^{3}+x^{2}.$$
It follows that $\ell=1$, $e=1$, $d=5$, then we have $(a,b,c,d,e)=(8,7,1,5,1)$.

\bigskip

{\it\textbf{Case 2.}}  Suppose  $B(x)$ is irreducible. Let $t_1=\gcd(a, b-c)$, $t_2=b$ and $t_3=c$. From $a=b+c$, $\gcd(a,b,c)=1$ and Proposition~\ref{prop3.3}, we know that all possible roots of unity of $f(x)$ are simple zeros, which are to be found among the zeros of
	\begin{align*}
		x^{t_1}=\pm 1; \quad x^{b}=\pm 1; \quad x^{c}=\pm 1.
	\end{align*}

Let $\eta$ be a root of unity of $f(x)$. Since $a=b+c$, we have $f(x)=(x^b+1)(x^c+1)-2$.  If $\eta$ is the zero of $x^{b}=1$, from $f(\eta)=(\eta^b+1)(\eta^c+1)-2=0$, it follows that  $\eta^c=0$, that is impossible;  if $\eta$ is the zero of $x^{b}=-1$, then $f(\eta)=(\eta^b+1)(\eta^c+1)-2=-2\ne 0$, that is a contradiction. Similarly, $\eta$ cannot be the zero of $x^{c}=\pm1$. So all the roots of unity  of $f(x)$ can only be found in the zeros of  $x^{t_1}=\pm1$.

Let $p(x)=(x^{t_1}-1)(x^{t_1}+1)$.  Then $A(x)|p(x)$ as all possible roots of unity of $A(x)$ (or $f(x)$) are simple zeros.

By the assumptions that $a=b+c$ and  $\gcd(a,b,c)=1$, we have $\gcd(b,c)=1$, and $t_1=\gcd(a, b-c)=\gcd(b+c, b-c)$.  If $b, c$ are different from odevity, then $t_1=1$. If $b, c$ are both odd, then $t_1=2$. If $t_1=1$, then $p(x)=(x-1)(x+1)$. If $t_1=2$, then $p(x)=(x^2-1)(x^2+1)=(x-1)(x+1)(x^2+1)$. Due to  $A(x)|p(x)$ and the fact $f(1)\ne 0$ (hence $A(1)\ne 0$),  all possible forms of $A(x)$ could be
$$x+1;\quad  x^2+1;\quad (x+1)(x^2+1).$$

\bigskip

{\it\textbf{Case 2.1.}} If $A(x)=x+1$, then $f(x)=A(x)B(x)=(x+1)B(x)$. Assume that $g(x)$ is irreducible. Since $g(x)$ and $f(x)$ have the same zero $\lambda^{s}$, we have $g(x)=B(x)$ so that
\begin{align*}
x^a+x^b+x^c-1&=(x+1)(x^d+x^e-1)\\
& =x^{1+d}+x^{1+e}-x+x^d+x^e-1.
\end{align*}
By comparing the powers of $x$ on both sides,  if $e<c$, then $e=1$, $a=d+1$ and $b+c=1+e+d=d+2$,  contradicting the assumption of $a=b+c$; if $e=c$,  then  $x$ on the right side cannot be canceled by any other term on the both sides; if $e>c$, then $x^c$ can not be canceled by any other term on the both sides. All are impossible.  Assume that  $g(x)$ is reducible. By Proposition \ref{prop1},  $g(x)=(x^{2\ell}+\epsilon x^\ell+1)h(x)$, where $\epsilon \in \{-1,1\}$, $\ell=\gcd(d,e)$ and $h(x)$ is irreducible. Notice that $h(\lambda^s)=B(\lambda^s)=0$. Thus $h(x)=B(x)$ so that
$$(x^a+x^b+x^c-1)(x^{2\ell}+\epsilon x^\ell+1)=(x+1)(x^d+x^e-1).$$
Since $a\ge d$ and $\ell=\gcd(d, e)\ge 1$, we have $a+2\ell>1+d$. Contradiction.

\bigskip

{\it\textbf{Case 2.2.}} If $A(x)=x^2+1$, then $f(x)=(x^2+1)B(x)$. If $g(x)$ is irreducible, then  $g(x)=B(x)$ so that
	\begin{align}
x^a+x^b+x^c-1 = x^{2+d}+x^{2+e}-x^2+x^d+x^e-1. \label{identity of case2.1}
	\end{align}
If $e<c$, then $e=2, a=d+2$ and $b+c=2+e+d=d+4$, contradicting the assumption of $a=b+c$. If $e>c$, then $x^c$ on the left side of \eqref{identity of case2.1} cannot be canceled by any other term on the both sides of \eqref{identity of case2.1}. That is impossible.
If $e=c$, then \eqref{identity of case2.1} can be reduced into
$$x^a+x^b=x^{2+d}+x^{2+e}-x^2+x^d.$$  Hence $d=2, a=d+2=4$ and $b=2+c$. From $a=b+c$, it follows that $c=1$.  Thus $(a,b,c,d,e)=(4,3,1,2,1)$.  Assume that $g(x)$ is reducible. Similarly as in Case~2.1, there exists $\epsilon\in\{-1,1\}$ such that
  $$(x^a+x^b+x^c-1)(x^{2\ell}+\epsilon x^\ell+1)=(x^2+1)(x^d+x^e-1),$$
 where $\ell=\gcd(d, e)$. Since $a\ge d$ and $\ell\ge 1$, we have $a+2\ell>2+d$. This is impossible.

\bigskip

{\it\textbf{Case 2.3.}} If $A(x)=(x+1)(x^2+1)$, then $f(x)=(x+1)(x^2+1)B(x)$. If $g(x)$ is irreducible, we have $g(x)=B(x)$ so that
\begin{align*}
x^a+x^b+x^c-1=(x+1)(x^2+1)(x^d+x^e-1)
\end{align*}
Let $x=1$, then the left side is $2$ while the right side is $4$. That is impossible. Assume that $g(x)$ is reducible. Similarly as in Case~2.1, there exists $\epsilon\in\{-1,1\}$ such that $$(x^a+x^b+x^c-1)(x^{2\ell}+\epsilon x^\ell+1)=(x+1)(x^2+1)(x^d+x^e-1),$$
where $\ell=\gcd(d, e)$. Let $x=1$, then $2(2+\epsilon)=4$ that implies $\epsilon=0$, which contradicts that  $\epsilon \in \{-1,1\}$.
\end{proof}

We remark that Theorem \ref{mainthm2} might be true if we remove the assumption of $\gcd(a,b,c)=1$ by using Theorem \ref{thm2}, but the proof will become tedious. As for the exceptional form  \eqref{3.3} not included in the theorem, we have the following complement.

\begin{Prop}
	Let $0< \lambda<1$. Then ${\mathcal{D}}(\lambda^8,\lambda^4,\lambda^2)\sim {\mathcal{D}}(\lambda^3,\lambda^2)$.
\end{Prop}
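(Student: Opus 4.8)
The plan is to prove $\mathcal D(\lambda^8,\lambda^4,\lambda^2)\sim \mathcal D(\lambda^3,\lambda^2)$ by exhibiting an explicit chain of derivations connecting the two contraction vectors, exactly as was done in the sufficiency part of Theorem~\ref{mainthm2}. The factorization \eqref{3.3}, namely $x^{8}+x^{4}+x^{2}-1=(x^{2}+1)(x^{3}+x^{2}-1)(x^{3}-x^{2}+1)$, tells us that the root $\lambda^s$ of $f(x)=x^8+x^4+x^2-1$ is a root of the irreducible factor $x^3+x^2-1$, which is also the irreducible polynomial whose root governs $\mathcal D(\lambda^3,\lambda^2)$. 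This strongly suggests the two Cantor sets are Lipschitz equivalent, and the natural route is to connect them through $\mathcal D(\lambda^6,\lambda^5,\lambda^2)$ and its refinements, which already appeared in the proof above.

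First I would recall from the sufficiency argument that iterating the $\lambda^3$ term in $(\lambda^3,\lambda^2)$ once gives $\mathcal D(\lambda^3,\lambda^2)\sim\mathcal D(\lambda^6,\lambda^5,\lambda^2)$, since replacing the branch $\lambda^3$ by its two sub-branches $\lambda^3\cdot\lambda^3=\lambda^6$ and $\lambda^3\cdot\lambda^2=\lambda^5$ is a partition refinement. So it suffices to connect $\mathcal D(\lambda^8,\lambda^4,\lambda^2)$ to $\mathcal D(\lambda^6,\lambda^5,\lambda^2)$, or equivalently to $\mathcal D(\lambda^3,\lambda^2)$, by a finite sequence of such derivations. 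The core task is therefore combinatorial bookkeeping of branch lengths: I would look for a common refinement $\boldsymbol\gamma$ that is derived from both $(\lambda^8,\lambda^4,\lambda^2)$ and from $(\lambda^3,\lambda^2)$ (or one of its refinements). Concretely, I would iterate branches of $(\lambda^8,\lambda^4,\lambda^2)$ — for instance splitting $\lambda^2$ and $\lambda^4$ repeatedly — to produce a multiset of exponents that can be matched against a similarly refined version of $(\lambda^3,\lambda^2)$.

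To keep the search systematic, I would pass to the dimension-normalized exponents: since $\lambda^s$ is the common root of both $x^3+x^2-1$ and (after the refinement) a trinomial coming from $(\lambda^3,\lambda^2)$, the identity $\lambda^{2s}+\lambda^{4s}+\lambda^{8s}=1$ must be expressible as a refinement of $\lambda^{3s}+\lambda^{2s}=1$. Using $\lambda^{3s}+\lambda^{2s}=1$ as a rewriting rule (replace any $\lambda^{ks}$ by $\lambda^{(k+3)s}+\lambda^{(k+2)s}$, corresponding to iterating a branch of length $\lambda^3$ versus $\lambda^2$), I would expand $\lambda^{2s}+\lambda^{4s}+\lambda^{8s}$ step by step until both sides present the same multiset of powers. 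This is precisely the graph-directed / substitution viewpoint that makes the derivation relation transparent, and writing out one successful expansion certifies the equivalence.

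The main obstacle will be finding the correct sequence of iterations so that the two refined vectors coincide as multisets; this is a finite but slightly delicate matching problem, and a wrong choice of which branch to iterate leads to exponent multisets that never align. I expect that, guided by the factorization \eqref{3.3} and by analogy with the $a=8$, $\{b,c\}=\{1,7\}$ case already treated, a short chain (two or three iterations on each side) will suffice; the routine verification that the final multisets agree and that each step is a genuine partition refinement is the only computation required, and I would present just that successful chain rather than the search behind it.
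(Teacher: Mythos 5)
There is a genuine gap here, and in fact the specific search you propose cannot succeed. You plan to connect $(\lambda^8,\lambda^4,\lambda^2)$ and $(\lambda^3,\lambda^2)$ by exhibiting a common refinement, i.e.\ a contraction vector derived from both, but no such vector exists: every exponent occurring in a partition of the $(\lambda^8,\lambda^4,\lambda^2)$-system is a sum of the even numbers $8,4,2$ and hence even, whereas every nontrivial partition of the $(\lambda^3,\lambda^2)$-system contains an odd exponent. (Indeed, a partition is a finite maximal antichain in the tree $\{1,2\}^*$; the unique element of the partition that is a prefix of the infinite word $1222\cdots$ has the form $12^k$, so it uses the $\lambda^3$-branch exactly once and carries the odd exponent $3+2k$.) Since $0<\lambda<1$, an odd power of $\lambda$ never equals an even one, so the two multisets of exponents can never align, no matter how many times you apply your rewriting rule; the same obstruction applies to $(\lambda^6,\lambda^5,\lambda^2)$ and to every further refinement of $(\lambda^3,\lambda^2)$. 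Moreover, producing a longer zigzag of derivations between the two vectors would essentially settle whether $(\lambda^5,\lambda)\sim(\lambda^3,\lambda^2)$ as contraction vectors, which is exactly the unresolved instance of Problem~\ref{main-prob} motivating this whole line of work, so you should not expect a derivation-only argument to exist.

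The paper's proof avoids this in two ways. Its first argument constructs explicit dust-like graph-directed systems $\{E_1,E_2\}$ and $\{F_1,F_2\}$ with the same structure for representatives of the two classes and invokes the criterion of \cite{RaRuXi06}. Its alternative argument does use derivations, but only to prove ${\mathcal D}(\lambda^8,\lambda^4,\lambda^2)\sim{\mathcal D}(\lambda^5,\lambda)$ --- there a common refinement does exist, with exponent multiset $\{12,10,8,8,6,6,4\}$ reachable from both $(\lambda^5,\lambda)$ and $(\lambda^8,\lambda^4,\lambda^2)$ --- and then it concludes via the known equivalence ${\mathcal D}(\lambda^5,\lambda)\sim{\mathcal D}(\lambda^3,\lambda^2)$ of Theorem~\ref{Raothm1}, which is itself established by a graph-directed construction rather than by derivations. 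To repair your proof you must either supply such a graph-directed construction directly, or reroute your chain through $(\lambda^5,\lambda)$ and cite Theorem~\ref{Raothm1} for the final step.
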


\begin{proof}
	Let $f_1(x)=\lambda^8 x, f_2(x)=\lambda^4 x+\lambda^3-\lambda^4$ and $f_3(x)=\lambda^3x+1-\lambda^2$ be an IFS on $\mathbb{R}$ and let $E$ be the associated self-similar set; let  $g_1(x)=\lambda^3 x, g_2(x)=\lambda^2 x+1-\lambda^2$ be another IFS on $\mathbb{R}$ and let $F$ be the associated self-similar set. Obviously, both $E$ and $F$ are dust-like and $E\in {\mathcal{D}}(\boldsymbol{\alpha}), F\in {\mathcal{D}}(\boldsymbol{\beta})$. Hence  in order to show ${\mathcal{D}}(\boldsymbol{\alpha})\sim {\mathcal{D}}(\boldsymbol{\beta})$,  it suffices to show $E\sim F$. Indeed, we can find $E$ and $F$ have the same graph-directed structure in the following way: let $E_1=E; E_2=\lambda^5 E\cup (\lambda E + 1-\lambda)$, then
	\begin{eqnarray*}
		E_1 &=& (\lambda^2 E_1+1-\lambda^2)\cup (\lambda^3E_2); \\
		E_2 &=& (\lambda^5E_1)\cup(\lambda^3 E_1+1-\lambda^3)\cup (\lambda^4 E_2+1-\lambda).
	\end{eqnarray*}
	Similarly for $F$, we let $F_1=F_2=F$, then
	\begin{eqnarray*}
		F_1 &=& (\lambda^2 F_1+1-\lambda^2)\cup (\lambda^3F_2); \\
		F_2 &=& (\lambda^3F_1)\cup(\lambda^5 F_1+1-\lambda^2)\cup (\lambda^4 F_2+1-\lambda^4).
	\end{eqnarray*}
	Both $\{E_1, E_2\}$ and $\{F_1, F_2\}$ are dust-like graph-directed sets and satisfy the conditions of Theorem 2.1 of \cite{RaRuXi06}, thus $E_i\sim F_i$ for $i=1,2$. Therefore $E\sim F$.
	
    We also give an  alternative proof. Iterating the terms in $(\lambda^5, \lambda)$ yields 
	$${\mathcal{D}}(\lambda^5,\lambda)\sim {\mathcal{D}}(\lambda^{10},\lambda^6,\lambda^6, \lambda^2)\sim {\mathcal{D}}(\lambda^{10},\lambda^6,\lambda^6, \lambda^7, \lambda^3)\sim {\mathcal{D}}(\lambda^{10},\lambda^6,\lambda^6, \lambda^{12},\lambda^8,\lambda^8,\lambda^4).$$ Iterating $\lambda^4$ and $\lambda^2$ in $(\lambda^8,\lambda^4,\lambda^2)$ yields 
	$${\mathcal{D}}(\lambda^8,\lambda^4,\lambda^2)\sim{\mathcal{D}}(\lambda^8,\lambda^{12},\lambda^8,\lambda^6,\lambda^{10},\lambda^6,\lambda^4).$$ It follows from Theorem \ref{Raothm1} that  ${\mathcal{D}}(\lambda^8,\lambda^4,\lambda^2)\sim {\mathcal{D}}(\lambda^5,\lambda)\sim {\mathcal{D}}(\lambda^3,\lambda^2).$
\end{proof}
\end{section}

\bigskip

\begin{section}{Cantor sets with homogeneous contraction vectors}

\begin{Lem}\label{lemma on existence of a cut-set}
Let $\boldsymbol{\alpha}=({\alpha},\dots,{\alpha})\in
{\mathbb{R}}^m$, $E\in {\mathcal{D}}(\boldsymbol{\alpha})$ with
$\dim_H E=s$. If there is a sequence of  positive integers
$\{a_i\}_{i=1}^n$ satisfying $\sum_{i=1}^n{\alpha}^{sa_i}=1$. Then
there exists a partition $\Lambda=\{{\bf i}_i: i=1,\dots, n\}$ for
$E$ such that $\alpha_{{\bf i}_i}={\alpha}^{a_i}$.
\end{Lem}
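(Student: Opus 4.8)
The plan is to recognize the hypothesis as a Kraft-type equality and then realize it by an explicit partition of the symbolic tree. Since $\boldsymbol{\alpha}$ is homogeneous, the dimension formula \eqref{dim. formula} reads $m\alpha^s=1$, so $\alpha^s=1/m$ and the assumption $\sum_{i=1}^n\alpha^{sa_i}=1$ becomes
$$\sum_{i=1}^n m^{-a_i}=1.$$
Moreover, because every map of the IFS has ratio $\alpha$, for any word $\mathbf i\in\Sigma^*$ we have $\alpha_{\mathbf i}=\alpha^{|\mathbf i|}$, where $|\mathbf i|$ denotes the length of $\mathbf i$. Hence it suffices to produce a partition $\Lambda=\{\mathbf i_1,\dots,\mathbf i_n\}$ of $E$ in which $\mathbf i_i$ has length exactly $a_i$: then automatically $\alpha_{\mathbf i_i}=\alpha^{a_i}$. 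Recalling that a partition corresponds to a finite prefix-free family of words whose cylinders $f_{\mathbf i}(E)$ are pairwise disjoint and cover $E$, the whole statement reduces to the purely combinatorial claim that whenever positive integers $a_1,\dots,a_n$ satisfy $\sum_i m^{-a_i}=1$, there is a complete (maximal) prefix-free set of words over $\{1,\dots,m\}$ with these prescribed lengths.

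I would then prove this combinatorial claim by induction on $n$. Note first that $n\ge m$, with equality forcing every $a_i=1$; the base case $n=m$ is handled by the trivial partition $\Lambda=\Sigma^1=\{1,\dots,m\}$, which evidently covers $E$ disjointly. For the inductive step assume $n>m$, so that $L:=\max_i a_i\ge 2$. The crucial observation is a divisibility fact: multiplying $\sum_i m^{-a_i}=1$ by $m^L$ gives $\sum_i m^{L-a_i}=m^L$, and since every summand with $a_i<L$ is divisible by $m$ (as $L-a_i\ge 1$) and so is $m^L$, the number $k$ of indices with $a_i=L$ must be a positive multiple of $m$; in particular $k\ge m$. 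I then replace $m$ of the length-$L$ values by a single length-$(L-1)$ value; the resulting multiset has $n-m+1<n$ entries, consists of positive integers, and still satisfies $\sum_i m^{-a_i'}=1$ since $m\cdot m^{-L}=m^{-(L-1)}$. By the induction hypothesis it is realized by a complete prefix-free family, which contains at least one word $w$ of length $L-1$; refining $w$ into its $m$ children $w1,\dots,wm$ keeps the family prefix-free and complete while restoring the original multiset of lengths. This produces the desired partition.

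Finally I translate back: labelling the words of the resulting partition so that $\mathbf i_i$ is the one of length $a_i$, homogeneity gives $\alpha_{\mathbf i_i}=\alpha^{|\mathbf i_i|}=\alpha^{a_i}$, which is exactly the assertion of the lemma. I expect the main obstacle to lie entirely in the combinatorial core — specifically, in the divisibility argument guaranteeing at least $m$ words of maximal length, which is what makes the merge/refine induction possible, and in checking that completeness (covering all of $E$, not merely disjointness) is preserved at each step; this is precisely where the \emph{equality} $\sum_i m^{-a_i}=1$, rather than an inequality, is used. One could alternatively invoke the converse of Kraft's inequality directly, but the self-contained induction above seems preferable in this geometric setting.
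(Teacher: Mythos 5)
Your proof is correct and follows essentially the same route as the paper's: both reduce the statement to realizing the Kraft equality $\sum_i m^{-a_i}=1$ by a complete prefix-free family, both hinge on the same divisibility observation that the number of maximal-length entries is a positive multiple of $m$, and both conclude by a merge-and-refine induction. The only cosmetic difference is that you induct on $n$ and merge $m$ maximal entries at a time, whereas the paper inducts on $\max_i a_i$ and replaces all entries of maximal length at once.
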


\begin{proof}
Let $\ell=\max_{1\leq i\leq n} a_i$. We shall show the lemma by
induction. If $\ell=1$, then $n=m$ and $\Lambda=\Sigma$ is a
partition.  If $\ell=k$, the lemma is true, for $\ell=k+1$, write
$$\Lambda_1=\{i: a_i=k+1\} \quad\text{and}\quad\Lambda_2=\{i:a_i\leq
k\}.$$ Then
$$
1 = \sum_{i=1}^n{\alpha}^{sa_i} = \sum_{i=1}^nm^{-a_i}= \sum_{i\in
\Lambda_1}m^{-a_i}+\sum_{i\in\Lambda_2}m^{-a_i} = \#\Lambda_1
m^{-(k+1)}+\sum_{i\in \Lambda_2}m^{-a_i}
$$ which implies $$\#\Lambda_1=m(m^k-\sum_{i\in
\Lambda_2}m^{k-a_i}).$$ Let $r=(m^k-\sum_{i\in
\Lambda_2}m^{k-a_i})$, hence $$1= rm^{-k}+\sum_{i\in
\Lambda_2}m^{-a_i}.$$ By the assumption, there exists
a partition $\Lambda=\{{\bf j}_i: 1\le i \le  r\}\cup\{{\bf i}_i: i\in\Lambda_2\}$ such that $\alpha_{{\bf j}_i}={\alpha}^k, i=1,\dots, r$  and $\alpha_{{\bf i}_i}={\alpha}^{a_i}, \  i\in\Lambda_2$. Take $$\Lambda'=(\cup_{i=1}^r{\bf j}_i\Sigma)\cup\{{\bf i}_i: i\in\Lambda_2\}.$$ That is a partition as desired. Therefore  the result follows.
\end{proof}

The next lemma is a special case of \cite{RaRuXi06}:

\begin{Lem} \label{lip. equ. lemma on cut-sets}
Let $E\in {\mathcal{D}}(\boldsymbol{\alpha})$ and $F\in {\mathcal{D}}(\boldsymbol{\beta})$. If there exist two partitions $\Lambda_1=\{{\alpha}_{{\bf i}_k}\}_{k=1}^N, \Lambda_2=\{{\beta}_{{\bf j}_k}\}_{k=1}^N$ for $E, F$, respectively, such that $({\alpha}_{{\bf i}_1},\dots, {\alpha}_{{\bf i}_N})$ is a permutation of $({\beta}_{{\bf j}_1},\dots, {\beta}_{{\bf j}_N})$. Then $\boldsymbol{\alpha}\sim \boldsymbol{\beta}$, hence $E\simeq F$.
\end{Lem}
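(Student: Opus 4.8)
The plan is to deduce $\boldsymbol{\alpha}\sim\boldsymbol{\beta}$ purely formally from the definition of equivalence of contraction vectors, and then to read off the Lipschitz equivalence of $E$ and $F$ from the introductory remark that $\boldsymbol{\alpha}\sim\boldsymbol{\beta}$ implies $\mathcal{D}(\boldsymbol{\alpha})\sim\mathcal{D}(\boldsymbol{\beta})$. The one geometric fact I rely on is that a partition refines the IFS: since $\Lambda_1$ is a partition for $E$, one has the disjoint decomposition $E=\bigcup_{k=1}^N f_{\mathbf{i}_k}(E)$, where each $f_{\mathbf{i}_k}$ is a contracting similarity of ratio $\alpha_{\mathbf{i}_k}$ and the pieces are pairwise disjoint. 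By uniqueness of the attractor, $E$ is exactly the dust-like self-similar set of the refined system $\{f_{\mathbf{i}_k}\}_{k=1}^N$, so $E\in\mathcal{D}(\boldsymbol{\gamma})$ with $\boldsymbol{\gamma}=(\alpha_{\mathbf{i}_1},\dots,\alpha_{\mathbf{i}_N})$; the condition $\sum_k\alpha_{\mathbf{i}_k}^d<1$ is automatic because $\dim_H E<d$. By definition $\boldsymbol{\gamma}$ is derived from $\boldsymbol{\alpha}$, hence $\boldsymbol{\alpha}\sim\boldsymbol{\gamma}$.

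Applying the same reasoning to $F$ and $\Lambda_2$ gives $\boldsymbol{\beta}\sim\boldsymbol{\delta}$ with $\boldsymbol{\delta}=(\beta_{\mathbf{j}_1},\dots,\beta_{\mathbf{j}_N})$. It remains to connect $\boldsymbol{\gamma}$ and $\boldsymbol{\delta}$, which by hypothesis are permutations of one another. Here I would note that reordering the trivial one-letter partition $\{1,\dots,N\}$ of the IFS defining $\boldsymbol{\gamma}$ exhibits any permutation of $\boldsymbol{\gamma}$ as a vector derived from $\boldsymbol{\gamma}$; in particular $\boldsymbol{\gamma}\sim\boldsymbol{\delta}$. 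Since $\sim$ is, by construction, the equivalence relation generated by derivation, the chain $\boldsymbol{\alpha}\sim\boldsymbol{\gamma}\sim\boldsymbol{\delta}\sim\boldsymbol{\beta}$ gives $\boldsymbol{\alpha}\sim\boldsymbol{\beta}$ by transitivity.

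To finish, $\boldsymbol{\alpha}\sim\boldsymbol{\beta}$ yields $\mathcal{D}(\boldsymbol{\alpha})\sim\mathcal{D}(\boldsymbol{\beta})$ by the introductory remark, and since $E\in\mathcal{D}(\boldsymbol{\alpha})$ and $F\in\mathcal{D}(\boldsymbol{\beta})$ this is exactly the desired Lipschitz equivalence of $E$ and $F$. Equivalently, bypassing the notion $\sim$ altogether, the two decompositions already place both $E$ and $F$ in the single class $\mathcal{D}(\boldsymbol{\gamma})$ (after the relabeling fixing the permutation), and any two members of $\mathcal{D}(\boldsymbol{\gamma})$ are Lipschitz equivalent.

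The only ingredient that is not bookkeeping is this last assertion, namely the existence of a bi-Lipschitz bijection between two dust-like sets carrying the same refined contraction data; this is precisely the content imported from \cite{RaRuXi06}. Were I to prove it directly, I would define $\phi\colon E\to F$ by the branch-matching rule $\phi\circ f_{\mathbf{i}_k}=g_{\mathbf{j}_k}\circ\phi$ (with $g_{\mathbf{j}_k}$ the similarities defining $F$) through the shared symbolic coding by $\{1,\dots,N\}^{\mathbb{N}}$, and then verify the two-sided Lipschitz bound from the fact that $\alpha_{\mathbf{i}_k}=\beta_{\mathbf{j}_k}$ makes the contraction rates along corresponding branches identical, so the distortion stays uniformly bounded. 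I expect this distortion estimate, rather than any conceptual step, to be the only place demanding care.
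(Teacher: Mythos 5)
Your argument is correct. Note, though, that the paper itself offers no proof of this lemma at all: it simply labels it ``a special case of \cite{RaRuXi06}'' and moves on. What you have done is unwind the statement into its two standing ingredients --- (i) a partition $\Lambda_1$ for $E$ exhibits $\boldsymbol{\gamma}=(\alpha_{\mathbf{i}_1},\dots,\alpha_{\mathbf{i}_N})$ as derived from $\boldsymbol{\alpha}$ (and likewise for $F$), permutations are absorbed by the set-theoretic nature of a partition, and transitivity of $\sim$ gives $\boldsymbol{\alpha}\sim\boldsymbol{\gamma}\sim\boldsymbol{\delta}\sim\boldsymbol{\beta}$; and (ii) the introductory facts that $\boldsymbol{\alpha}\sim\boldsymbol{\beta}$ implies $\mathcal{D}(\boldsymbol{\alpha})\sim\mathcal{D}(\boldsymbol{\beta})$, or equivalently that any two members of a single class $\mathcal{D}(\boldsymbol{\gamma})$ are bi-Lipschitz equivalent. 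All the formal steps check out, including the observation that $\sum_k\alpha_{\mathbf{i}_k}^d<\sum_k\alpha_{\mathbf{i}_k}^s=1$ keeps $\boldsymbol{\gamma}$ a legitimate contraction vector. The one genuinely geometric input --- the branch-matching bi-Lipschitz map between two dust-like sets with identical refined contraction data --- you correctly isolate and attribute to \cite{RaRuXi06}; your sketch of it (conjugating through the common coding by $\{1,\dots,N\}^{\mathbb{N}}$ and controlling distortion via the uniform gap condition of dust-like sets) is the standard construction and is exactly where the cited result does its work. So your proof is a more self-contained substitute for the paper's citation, at the cost of still deferring the same single hard step to the same source.
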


\begin{theorem}
Let $\boldsymbol{\alpha}=({\alpha},\dots,{\alpha})\in {\mathbb{R}}^m,\  \boldsymbol{\beta}=({\beta}_1,\dots,{\beta}_n)\in {\mathbb{R}}^n$ be two contraction vectors. Then ${\mathcal{D}}(\boldsymbol{\alpha})\sim {\mathcal{D}}(\boldsymbol{\beta})$ if and only if $\boldsymbol{\alpha} \sim \boldsymbol{\beta}$.
\end{theorem}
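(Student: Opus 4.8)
The plan is to prove the nontrivial direction, namely that $\mathcal D(\boldsymbol\alpha)\sim\mathcal D(\boldsymbol\beta)$ implies $\boldsymbol\alpha\sim\boldsymbol\beta$, since the reverse implication is already recorded as trivial in the introduction. The whole strategy rests on Theorem~\ref{Raothm2}, which under the homogeneity assumption converts the geometric hypothesis into the purely arithmetic statement: there exist $q,p_1,\dots,p_n\in\Zp$ with $m^{1/q}\in\Zp$ and $\beta_j=\alpha^{p_j/q}$ for every $j$. So the first step is simply to invoke Theorem~\ref{Raothm2} and fix these integers, writing $M:=m^{1/q}$, so that $m=M^q$ and $\alpha^{1/q}$ is the common ``unit'' ratio in which everything can be expressed.

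Next I would set up the combinatorial bridge. The goal is to realize both $\boldsymbol\alpha$ and $\boldsymbol\beta$ as partitions of one common contraction vector. The natural candidate is the homogeneous vector $\boldsymbol\gamma=(\gamma,\dots,\gamma)\in\mathbb R^{M}$ with $\gamma=\alpha^{1/q}$: note $\boldsymbol\gamma$ is a legitimate contraction vector because $M\gamma^s=m^{1/q}(\alpha^{s})^{1/q}=(m\alpha^{s})^{1/q}=1$, using the dimension formula $m\alpha^s=1$ for $\boldsymbol\alpha$. This $\boldsymbol\gamma$ has the same Hausdorff dimension $s$ as both of our vectors. The point of choosing $\boldsymbol\gamma$ is that $\alpha=\gamma^q$ and each $\beta_j=\gamma^{p_j}$, so both $\boldsymbol\alpha$ and $\boldsymbol\beta$ consist of integer powers of $\gamma$.

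The heart of the argument is then to apply Lemma~\ref{lemma on existence of a cut-set} to $\boldsymbol\gamma$. For $\boldsymbol\alpha$: the integer sequence is $a_i=q$ repeated $m$ times, and indeed $\sum_{i=1}^m \gamma^{sq}=m\alpha^s=1$, so Lemma~\ref{lemma on existence of a cut-set} produces a partition of $E\in\mathcal D(\boldsymbol\gamma)$ whose branch ratios are exactly $(\gamma^q,\dots,\gamma^q)=\boldsymbol\alpha$. For $\boldsymbol\beta$: the sequence is $p_1,\dots,p_n$, and $\sum_{j=1}^n\gamma^{sp_j}=\sum_{j=1}^n\beta_j^s=1$ by the dimension formula for $\boldsymbol\beta$, so the lemma yields a partition whose branch ratios are $(\gamma^{p_1},\dots,\gamma^{p_n})=\boldsymbol\beta$. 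Thus $\boldsymbol\alpha$ and $\boldsymbol\beta$ are each \emph{derived from} $\boldsymbol\gamma$ in the precise sense defined in the introduction, and therefore $\boldsymbol\alpha\sim\boldsymbol\gamma\sim\boldsymbol\beta$, giving $\boldsymbol\alpha\sim\boldsymbol\beta$ as required.

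The main obstacle, and the only point demanding care, is verifying that the hypotheses of Lemma~\ref{lemma on existence of a cut-set} are genuinely met for both sequences simultaneously using the \emph{same} ambient vector $\boldsymbol\gamma$; this is exactly where the condition $m^{1/q}\in\Zp$ from Theorem~\ref{Raothm2} is indispensable, since it guarantees $\boldsymbol\gamma$ has an integer number $M$ of branches and hence is an honest contraction vector to which the lemma applies. One should also confirm that the two partitions live on a common $E\in\mathcal D(\boldsymbol\gamma)$, but since all sets in $\mathcal D(\boldsymbol\gamma)$ are mutually Lipschitz equivalent and the notion $\boldsymbol\alpha\sim\boldsymbol\beta$ is defined through derivation of contraction vectors rather than through a single fixed set, this causes no difficulty. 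Everything else is a direct translation through the dimension formula $\sum_i\alpha_i^s=1$, so I expect the proof to be short, as the author's remark ``A short proof $\dots$ will be included'' already suggests.
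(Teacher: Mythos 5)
Your proposal is correct and follows essentially the same route as the paper: introduce the common homogeneous refinement $\boldsymbol{\gamma}=(\alpha^{1/q},\dots,\alpha^{1/q})\in\mathbb{R}^{M}$ with $M=m^{1/q}$, and use Lemma~\ref{lemma on existence of a cut-set} to exhibit both $\boldsymbol{\alpha}$ and $\boldsymbol{\beta}$ as derived from $\boldsymbol{\gamma}$, whence $\boldsymbol{\alpha}\sim\boldsymbol{\gamma}\sim\boldsymbol{\beta}$. The only difference is at the outset: you import the arithmetic data ($m^{1/q}\in\Zp$ and $\beta_j=\alpha^{p_j/q}$) directly from Theorem~\ref{Raothm2}, whereas the paper re-derives it from the Falconer--Marsh conditions of Theorem~\ref{Falconer necessary condition on lip.equ.} together with a $\gcd$ normalization --- a legitimate shortcut, since Theorem~\ref{Raothm2} is a previously established result.
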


\begin{proof}
The  sufficient part is obvious by Lemma \ref{lip. equ. lemma on cut-sets}. We only need to prove the necessary part. Let $E\in {\mathcal{D}}(\boldsymbol{\alpha})$ and $F\in{\mathcal{D}}(\boldsymbol{\beta})$. If $E\sim F$, then they have the same Hausdorff dimension $s$. Moreover, by Theorem \ref{Falconer necessary condition on lip.equ.}, there exists $q_0\in \Zp$ such that
$$\text{sgp}({\beta}_1^{q_0},\dots,{\beta}_n^{q_0})\subset
\text{sgp}(\alpha,\dots,\alpha)=\{\alpha^p: p\in \Zp\}.$$
Thus there exists $p_j\in \Zp$ such that
$\beta_j=\alpha^{\frac{p_j}{q_0}}$ for all $j.$
Let $d=\text{gcd}(q_0,p_1,\dots,p_n)$ and $q=q_0/d$, $p_j'=p_j/d$ for all $j=1,\ldots,n$. Then
$\text{gcd}(q,p_1',\dots,p_n')=1$. Since $${\mathbb
Q}(\beta_1^s,\dots,\beta_n^s)={\mathbb
Q}(\alpha^s,\dots,\alpha^s)={\mathbb Q}(\frac{1}{m})={\mathbb Q},$$
We have $\beta_j^s\in {\mathbb Q}$, i.e.,
$\alpha^{p_js/q_0}=m^{-p_j'/q}\in {\mathbb Q}$ for all $j$. Combining this with $\mathrm{gcd}(q,p_1',\dots,p_n')=1$, we know that $m^{1/q}\in
{\mathbb{Q}}$ so that $m^{1/q}\in\Zp$.

Let $k=m^{1/q}$ and $\lambda= \alpha^{1/q}$.
Then $k\lambda^s=1$. Define another contraction vector
$\boldsymbol{\lambda}=(\lambda,\dots,\lambda)\in {\mathbb{R}}^k$.
Let $E_0\in {\mathcal D}(\boldsymbol{\lambda})$. Then $\dim_H
E_0=s$. Notice that $\Lambda=\{1,\dots,k\}^q$ is a partition for $E_0$
with $\#{\Lambda}=k^q=m$ and $\lambda_{\bf i}=\lambda^q=\alpha$ for
${\bf i}\in \Lambda$. Thus, by Lemmas \ref{lemma on existence of a
cut-set} and \ref{lip. equ. lemma on cut-sets}, we have $\boldsymbol{\alpha}\sim \boldsymbol{\lambda}$.
Similarly, from $\beta_j=\alpha^{p_j/q}=\lambda^{p_j}$ and
$\sum_{j=1}^n \lambda^{sp_j}=\sum_{j=1}^n\beta_j^s=1$, we have
$\boldsymbol{\lambda}\sim  \boldsymbol{\beta}$ as well. Therefore $\boldsymbol{\alpha}\sim \boldsymbol{\beta}$.
\end{proof}

The theorem easily yields the following useful result, which can  also be induced by Theorem \ref{Falconer necessary condition on lip.equ.}.

\begin{Cor}
Let $\boldsymbol{\alpha}=(\alpha,\dots,\alpha)\in {\mathbb{R}}^m,\ \boldsymbol{\beta}=(\beta,\dots,\beta)\in {\mathbb{R}}^n$, and $m{\alpha}^s=n{\beta}^s=1$. Then ${\mathcal{D}}(\boldsymbol{\alpha})\sim{\mathcal{D}}(\boldsymbol{\beta})$ if and only if $\frac{\log m}{\log n}\in {\mathbb{Q}}.$
\end{Cor}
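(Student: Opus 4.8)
The plan is to reduce the statement to a single multiplicative relation between $\alpha$ and $\beta$ and then read off rationality. First I would record the dimension bookkeeping. Since $m\alpha^s=n\beta^s=1$ and $0<\alpha,\beta<1$, $s>0$, taking logarithms gives $s\log\alpha=-\log m$ and $s\log\beta=-\log n$, whence
\[
\frac{\log m}{\log n}=\frac{\log\alpha}{\log\beta}.
\]
Consequently the condition $\frac{\log m}{\log n}\in{\mathbb Q}$ is equivalent to $\frac{\log\alpha}{\log\beta}\in{\mathbb Q}$, and the latter is in turn equivalent to the existence of $p,q\in\Zp$ with $\alpha^q=\beta^p$. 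This reformulation is what both implications will hinge on.

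For the forward implication, I would suppose ${\mathcal D}(\boldsymbol{\alpha})\sim{\mathcal D}(\boldsymbol{\beta})$. Since $\boldsymbol{\alpha}$ is homogeneous, Theorem~\ref{Raothm2} applies directly (with all $\beta_j$ equal to the single value $\beta$) and yields $p,q\in\Zp$ with $\beta=\alpha^{p/q}$, that is $\alpha^p=\beta^q$, so $\frac{\log\alpha}{\log\beta}\in{\mathbb Q}$. By the reformulation this gives $\frac{\log m}{\log n}\in{\mathbb Q}$. One may equally invoke the just-proved Theorem~\ref{main thm3} to obtain $\boldsymbol{\alpha}\sim\boldsymbol{\beta}$ and unwind the derivation, or apply Theorem~\ref{Falconer necessary condition on lip.equ.}(2): the latter furnishes $p\in\Zp$ with $\alpha^p\in\mathrm{sgp}(\beta,\dots,\beta)=\{\beta^k:k\in\Zp\}$, hence $\alpha^p=\beta^j$ for some $j\in\Zp$, and again $\frac{\log\alpha}{\log\beta}\in{\mathbb Q}$. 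All three routes give the same conclusion.

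For the reverse implication, I would suppose $\frac{\log m}{\log n}=\frac{p}{q}$ with $p,q\in\Zp$ (positive since $m,n\ge 2$). By the reformulation $\alpha^q=\beta^p$, and clearing logs in the defining ratio gives $m^q=n^p$. Now I would iterate each IFS: for $E\in{\mathcal D}(\boldsymbol{\alpha})$, the set $\Lambda=\Sigma^q$ of all length-$q$ words is a partition into $m^q$ cylinders, each of contraction ratio $\alpha^q$, so $\boldsymbol{\alpha}$ is derived to the homogeneous vector $(\alpha^q,\dots,\alpha^q)\in{\mathbb R}^{m^q}$; symmetrically $\boldsymbol{\beta}$ is derived to $(\beta^p,\dots,\beta^p)\in{\mathbb R}^{n^p}$. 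Because $m^q=n^p$ and $\alpha^q=\beta^p$, these two derived vectors are literally identical, so $\boldsymbol{\alpha}\sim\boldsymbol{\beta}$; the trivial implication $\boldsymbol{\alpha}\sim\boldsymbol{\beta}\Rightarrow{\mathcal D}(\boldsymbol{\alpha})\sim{\mathcal D}(\boldsymbol{\beta})$ (equivalently Theorem~\ref{main thm3}) then finishes.

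The computation carries no serious obstacle, and this is really a corollary in spirit. The only point needing care is the forward direction, where one must extract a single relation $\alpha^q=\beta^p$ from the bi-Lipschitz hypothesis rather than merely a semigroup inclusion; here the homogeneity of $\boldsymbol{\alpha}$ is essential, and Theorem~\ref{Raothm2} (or Theorem~\ref{Falconer necessary condition on lip.equ.} together with the one-generator structure of $\mathrm{sgp}(\beta,\dots,\beta)$) supplies exactly this. The reverse direction collapses to a one-line iteration once the dimension identity has converted the rationality hypothesis into the twin relations $\alpha^q=\beta^p$ and $m^q=n^p$.
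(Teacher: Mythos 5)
Your proposal is correct and follows essentially the route the paper intends: the paper states the corollary as an immediate consequence of Theorem~\ref{main thm3} (or of Theorem~\ref{Falconer necessary condition on lip.equ.}), and your argument --- converting $m\alpha^s=n\beta^s=1$ into $\frac{\log m}{\log n}=\frac{\log\alpha}{\log\beta}$, extracting $\beta=\alpha^{p/q}$ from the homogeneous case for the forward direction, and iterating to the common vector $(\alpha^q,\dots,\alpha^q)=(\beta^p,\dots,\beta^p)$ for the converse --- is exactly the natural unwinding of that derivation.
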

\end{section}

\bigskip

\noindent {\bf Acknowledgements:} The first author gratefully acknowledges the support of K. C. Wong Education Foundation and DAAD.

\end{document}